\theoremstyle{plain}
\newtheorem{thm}[subsection]{Theorem}
\newtheorem{lem}[subsection]{Lemma}
\newtheorem{cor}[subsection]{Corollary}
\theoremstyle{definition}
\newtheorem{rk}[subsection]{\textmd{Remark}}
\numberwithin{equation}{section} \setcounter{tocdepth}{1}
\begin{document}
\title [Solution of Certain Pell Equations]{Solution of Certain Pell Equations}
\author{Zahid Raza, Hafsa Masood Malik}
 \address{Department of Mathematics, \\ National University of Computer and Emerging Sciences, B-Block, Faisal Town, Lahore,
         Pakistan.}
\email{zahid.raza@nu.edu.pk, hafsa.masood.malik@gmail.com}

\subjclass{ 11D09, 11D79, 11D45, 11A55, 11B39, 11B50. }

\keywords{The Pell equations, continued fraction, integer solutions, the generalized Fibonacci and Lucas sequences, binary quadratic form, cycle, proper cycle.}

\begin{abstract}
Let $a,b,c $ be any positive integers such that $c\mid ab$ and $d_i^\pm$ is a  square free positive integer of the form $d_i^\pm=a^{2k} b^{2l}\pm i c^m$ where $k,l \geq m$ and $i=1,2.$ The main focus of this paper to find the fundamental solution of the equation $ x^2-d_i^\pm y^2=±1,$ with the help of the continued fraction of $\sqrt{d_i^\pm}.$
We also obtain all the positive solutions of the equations $ x^2-d_i^\pm y^2=\pm 1$ and $ x^2-d_i^\pm y^2=\pm 4$ by means of the Fibonacci and Lucas sequences.

Furthermore, in this work, we derive some algebraic relations on the Pell form $ F_{d_i^\pm}(x, y) = x^2-d_i^\pm y^2 $ including cycle, proper cycle, reduction and proper automorphism of it. We also determine the integer solutions of the Pell equation  $ F_{\Delta _{d_i^\pm}} (x, y) = 1 $ in terms of  $d_i^\pm.$

We generalized  all the results of the papers  \cite{Bw}, \cite{G},  \cite{P} and \cite{Atz}.
\end{abstract}

\maketitle

\section{Introduction}
 Let $ d $  be a positive integer which is not a perfect square and $ N $ be any nonzero fixed integer. Then the equation $ x^2 - d y^2 = N $ is known as Pell equation after the name of English mathematician, John Pell. The equations $ x^2 - d y^2 = 1$ and $ x^2 - d y^2 = -1 $ are known as the classical Pell equations. If $ a^2 - d b^2 = N,$ we say that $(a, b)$ is a solution of the Pell equation $ x^2 - d y^2 = N.$ We use the notation $ (a, b)$ and $ a + \sqrt{d} b $ interchangeably to denote the solutions of the equation $ x^2 - d y^2 = N.$ Also, if $ a $ and $ b $ are positive, we say that $ a + b \sqrt{d}$ is a positive solution to the equation $ x^2-dy^2=N$. Among these there is a least solution $ a_{1} + b_{1} \sqrt{d} $, in which $ a_{1} $ and $ b_{1} $ have their least positive values. Then the number $ a_{1} + b_{1} \sqrt{d}  $ is called fundamental solution of the equation $  x^2 -d y^2 = N.$  If $ a + \sqrt{d} b $ and $ r + \sqrt{d} s $  are solutions of the equation $ x^2 - d y^2 = N $, then $ a=r $  iff $  b = s $, and $ a+\sqrt{d} b< r + \sqrt{d} s $ iff $ a < r $ and $ b < s $.

 An equation $x^2-dy^2=1$ has infinite many solutions iff the equation $x^2-dy^2=-1$ has no solution. The continued fraction of $\sqrt{d}$ played a vital role to solve the Pell equation $ x^2-dy^2=\pm 1$. Actually its period length is useful for knowing the solution of this equation.
 Let $d$ be a positive integer that is not a perfect square. Then there is a continued fraction expansion of $ \sqrt{d} $ such that
 $ \sqrt{d}=[a_{0},\overline{a_{1} , a_{2} ,\ldots, a_{n-1} ,2a_{0} }]$ where $ n $ is the period length and the ${a_{j}}'s $ are given by the recursion formula;
    $a_{0}=\sqrt{d} , a_{k} = \llcorner a_{k} \lrcorner  $
    and
     $ a_{k+1} = \frac{1}{x_{k}-a_{k}},k=0,1,2,\ldots$

    Recall that $ a_{n} = 2 a_{0}  $ and $ a_{n+k}=a_{k}$ for all $k \geq 1.$ Then $ n^{th}  $ convergent of$ \sqrt{d} $   is given by
    $$\frac{p_{n}}{q_{n}} =[a_{0},\overline{ a_{1} , a_{2} , \ldots , a_{n-1}, a_{n}}  ] = a_{0} + \frac{1}{a_{1}\frac{1}{a_{2} + \frac{1}{\ddots \frac{1}{a_{n-1}+\frac{1}{a_{n}}}}}}  \ \ \ \ n \geq 0$$

In this paper, we give the fundamental solution of the equation $ x^2 - d_i^\pm y^2 = \pm 1 $ by means of the period length of the continued fraction expansion of $ \sqrt{d_i^\pm}$, where $ d_i^\pm={a^{2k} b^{2l}\pm ic^m }.$
After finding the fundamental solution of the Pell equation  $x^2-d_i^\pm y^2=\pm 1,$  we obtain the positive integer solutions of equation  $ x^2-d_i^\pm y^2=\pm 4,$ for  $d_i^\pm=a^{2k} b^{2l}\pm ic^m$ by the means of the generalized Fibonacci and Lucas sequences. The main results of this paper also generalized the results presented in \cite{Bw}, \cite{G} and \cite{P}.

Furthermore, in this work, we derive some algebraic relations on the Pell form $ F_{d_i^\pm}(x, y) =
x^2-d_i^\pm y^2 $ or of discriminant $ \Delta _{d_i^\pm} = 4d_i^\pm $ including cycle, proper cycle, reduction
and proper automorphism of it. Also we determine the integer solutions of
the Pell equation  $ F_{\Delta _{d_i^\pm}} (x, y) = 1 $ via $d_i^\pm$. The main results of this paper also generalized the results presented in \cite{Atz}.

\section{Basic Setup}

 If $ N$ is a quadratic non-residue modulo $d,$ then the Pell Equation $x^2 - dy^2 = N$
 has no integer solution. If $N$ is a perfect square, then the Pell Equation $x^2 - dy^2 = N$ is solvable in
integers for all positive, non-square integers $d$.
The equation $x^2 - dy^2 = 1$ has a solution in positive integers $x$ and $y$ for all
positive, non-square integers $d$.
If $(x, y) = (u, v)$ is a positive integer solution of $x^2-dy^2 = 1$, then there exists
a positive integer $m$ such that $u + v\sqrt{d}= x_1 + y_1 m\sqrt{d}$
, where $(x_1, y_1)$ is the fundamental
solution of $x^2 - dy^2 = 1$.

If we know the fundamental solutions of the equations $ x^2 - d y^2 = \pm 1$ and $ x^2 - d y^2 = \pm 4 $ then we can give all positive integer solutions to these equations. For more information about the Pell equation, one can consult \cite{N} and \cite{L}.

The generalized Fibonacci and Lucas sequences $f_{n} (w,z)$ and $ L_{n}(w,z)$ are given in the followings:

 Let $w$ and $z$ be two nonzero positive integers with $ w^2+4z\geq 0$. The generalized Fibonacci and Lucas sequences with initial conditions $ f_{0} (w,z)=0,f_{1} (w,z)=1 $ and $ L_{0} (w,z)=2,L_{1} (w,z)=0$ are of the form
$ f_{n} (w,z)=w f_{n-1} (w,z)+zf_{n-2} (w,z)$,
$ L_{n} (w,z)=w L_{n-1} (w,z)+z L_{n-2} (w,z)$
$\forall \ n \geq 2$,
respectively.
They can also be represented by the closed formula
$f_{n} (w,z)=\frac{\alpha ^n-\beta ^n}{\alpha -\beta}$
and
   			$L_{n} (w,z)=\alpha^n + \beta^n, $
where  $\alpha =\frac{w + \sqrt{w^2+4z}}{2},$ $ \beta=\frac{w-\sqrt{w^2+4z}}{2}.$ This  identity is well known as Binet's formula.
It is easy to see that $ \alpha + \beta =w, \alpha -\beta = \sqrt{w^2+4z} $ and $ \alpha\beta=-z. $
For more information about the generalized Fibonacci and Lucas sequences, one can consult  \cite{Kl}, \cite{Rbbi}, \cite{Mc} and \cite{Mh}.
   \begin{lem}\label{l1}
   Let $\frac{p_k}{q_k}$ be the convergent of the continued fraction expansion of $\sqrt{d},$ and let $l$ be the length of the expansion.
   \begin{itemize}
     \item If $l$ is even, then the fundamental solution of $ x^2 - d y^2 = 1 $ is given by $$x=p_{l-1} \ \ \ y=q_{l-1} $$ and the equation $ x^2 - d y^2 = -1 $ has no solutions.
     \item If $l$ is odd, then the fundamental solution of $ x^2 - d y^2 = 1 $ is given by $$x=p_{2l-1} \ \ \ y=q_{2l-1} $$ and $x=p_{l-1}, y=q_{l-1} $ is the fundamental solution of $ x^2 - d y^2 = -1$.
   \end{itemize}
       \end{lem}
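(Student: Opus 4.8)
The plan is to work with the complete quotients of the continued fraction expansion of $\sqrt{d}$ and the classical identity relating convergents to the norm form. Writing $\sqrt{d} = [a_0; \overline{a_1, \ldots, a_{l-1}, 2a_0}]$ and denoting by $\alpha_k = \frac{P_k + \sqrt{d}}{Q_k}$ the $k$-th complete quotient, I would first record the standard recurrences $P_{k+1} = a_k Q_k - P_k$ and $Q_{k+1} = \frac{d - P_{k+1}^2}{Q_k}$, together with the identity $\sqrt{d} = \frac{\alpha_k p_{k-1} + p_{k-2}}{\alpha_k q_{k-1} + q_{k-2}}$. Substituting $\alpha_k = \frac{P_k + \sqrt{d}}{Q_k}$ and comparing rational and irrational parts yields the key identity
$$p_{k-1}^2 - d\, q_{k-1}^2 = (-1)^k Q_k,$$
which drives the entire argument.

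The second step is to pin down exactly when $Q_k = 1$. Since $\alpha_1$ is purely periodic of period $l$, one computes $\alpha_l = 2a_0 + \tfrac{1}{\alpha_1} = a_0 + \sqrt{d}$, so $P_l = a_0$ and $Q_l = 1$, and by periodicity $Q_{ml} = 1$ for every $m \geq 1$; conversely $Q_k = 1$ forces $\alpha_k = a_0 + \sqrt{d}$, which by uniqueness of the periodic tail can only occur when $l \mid k$. Combining this with the key identity gives
$$p_{ml-1}^2 - d\, q_{ml-1}^2 = (-1)^{ml},$$
so that among all convergents the value $\pm 1$ is attained precisely at the indices $k-1 = ml-1$.

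The third step is to upgrade "a solution arising from a convergent" to "every positive solution". Here I would invoke the classical approximation theorem: if $|x^2 - d y^2| < \sqrt{d}$ with $x,y > 0$, then $x/y$ is a convergent of $\sqrt{d}$. Since $1 < \sqrt{d}$ for every non-square $d \geq 2$, every positive solution of $x^2 - d y^2 = \pm 1$ has the form $(p_{k-1}, q_{k-1})$, and hence by the previous step occurs at an index $k = ml$. Because the sequences $p_k$ and $q_k$ are strictly increasing (all partial quotients being positive), the least such solution is obtained at the smallest admissible value of $m$.

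Finally I would split on the parity of $l$. If $l$ is even, then $(-1)^{ml} = 1$ for all $m$, so the equation $x^2 - d y^2 = -1$ is never solved by a convergent and therefore has no solution at all, while the fundamental solution of $x^2 - d y^2 = 1$ is the first convergent meeting norm $1$, namely $(p_{l-1}, q_{l-1})$. If $l$ is odd, then $(-1)^{ml}$ alternates: $m=1$ gives $-1$, so $(p_{l-1}, q_{l-1})$ is the fundamental solution of $x^2 - d y^2 = -1$, whereas $m=2$ gives $+1$, so $(p_{2l-1}, q_{2l-1})$ is the fundamental solution of $x^2 - d y^2 = 1$. The main obstacle is the third step: proving that no positive solution can escape the list of convergents, which is exactly where the approximation theorem and the strict monotonicity of the convergent numerators and denominators are indispensable.
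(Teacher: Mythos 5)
Your proposal is correct, but there is nothing in the paper to compare it against: Lemma \ref{l1} is stated in the Basic Setup section without proof, as standard background on continued fractions of the kind found in the paper's references (Nagell, LeVeque). What you have written is the classical argument, and its architecture is sound: the norm identity $p_{k-1}^2 - d\,q_{k-1}^2 = (-1)^k Q_k$, the characterization that $Q_k = 1$ exactly when $l \mid k$, the approximation theorem forcing every positive solution of $x^2 - dy^2 = \pm 1$ to arise from a convergent (legitimate here since $\gcd(x,y)=1$ is automatic and $1 < \sqrt{d}$), and the final parity split, with monotonicity of $p_k$ and $q_k$ securing minimality. Two standard facts should be made explicit to close the argument fully. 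First, you need $Q_k > 0$ for all $k \geq 1$ in the expansion of $\sqrt{d}$; without this, $Q_k = -1$ would be a second way for your key identity to produce $\pm 1$, and the equivalence between solutions and the condition $Q_k = 1$ would not follow. Second, your converse step ``$Q_k = 1$ forces $\alpha_k = a_0 + \sqrt{d}$'' rests on the complete quotients $\alpha_k$ ($k \geq 1$) being reduced: reducedness gives $\sqrt{d} - 1 < P_k < \sqrt{d}$, hence $P_k = \lfloor \sqrt{d} \rfloor = a_0$; then equality $\alpha_k = \alpha_l$ propagates to the entire subsequent sequence of complete quotients, so the tail is periodic with period $|k-l|$, and minimality of $l$ yields $l \mid k$ --- this is the precise content behind your phrase ``uniqueness of the periodic tail.'' With those two facts supplied, your proof is complete and is exactly the argument the paper implicitly relies on.
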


    \begin{thm}\label{t1}
      If $ x_{1}, y_{1}$ is the fundamental solution of $ x^2 - d y^2 = 1,$ then every positive solution of the equation is given by $ x_{n},$ $y_{n}$, where $x_n$ and $y_n$ are integers determined from $x_n+y_n\sqrt{d}=(x_{1} + y_{1}\sqrt{d})^n $ \ \ \ \ $n=1,2,3,\ldots$
    \end{thm}
    \begin{thm}\label{t2}
      If $ x_{1}, y_{1}$ is the fundamental solution of $ x^2 - d y^2 =-1,$ then every positive solution of the equation is given by $ x_{n},$ $y_{n}$, where $x_n$ and $y_n$ are integers determined from $x_n+y_n\sqrt{d}=(x_{1} + y_{1}\sqrt{d})^{2n-1} $ \ \ \ \ $n=1,2,3,\ldots$
      \end{thm}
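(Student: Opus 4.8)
The plan is to work inside $\mathbb{Z}[\sqrt{d}]$ with the multiplicative norm $N(u+v\sqrt{d})=u^2-dv^2$ and the conjugation $\overline{u+v\sqrt{d}}=u-v\sqrt{d}$, writing $\epsilon=x_1+y_1\sqrt{d}$ for the given fundamental solution, so that $N(\epsilon)=-1$ and $\epsilon>1$. The easy direction comes first: multiplicativity gives $N(\epsilon^{2n-1})=N(\epsilon)^{2n-1}=(-1)^{2n-1}=-1$, so each odd power genuinely solves $x^2-dy^2=-1$; and since $\epsilon>1$ forces $\epsilon^{2n-1}>1$ with conjugate $\overline{\epsilon^{2n-1}}=-\epsilon^{-(2n-1)}\in(-1,0)$, both coordinates $x_n,y_n$ come out positive (because $2x_n=\epsilon^{2n-1}+\overline{\epsilon^{2n-1}}>0$ and $2y_n\sqrt{d}=\epsilon^{2n-1}-\overline{\epsilon^{2n-1}}>0$). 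Thus the listed pairs are bona fide positive solutions, and it remains only to prove there are no others.

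The conceptual heart of the argument is the claim that $\epsilon^2$ is the fundamental solution of the companion equation $x^2-dy^2=1$. Here I would invoke Theorem~\ref{t1}: since $N(\epsilon^2)=1$ and $\epsilon^2>1$, writing $\eta$ for the fundamental solution of $x^2-dy^2=1$ gives $\epsilon^2=\eta^m$ for some $m\geq 1$, whence $\eta\leq\epsilon^2$. To force $m=1$, suppose $m\geq 2$ and consider $\mu=\epsilon\,\eta^{-1}$, which has integer coordinates (as $\eta^{-1}=\overline{\eta}$ when $N(\eta)=1$) and norm $N(\epsilon)/N(\eta)=-1$. If $m=2$ then $\eta^2=\epsilon^2$ gives $\eta=\epsilon$, impossible since the norms differ; if $m\geq 3$ then $\mu=\epsilon^{(m-2)/m}$ satisfies $1<\mu<\epsilon$. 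Either way one produces a positive solution of $x^2-dy^2=-1$ strictly smaller than $\epsilon$, contradicting the minimality of $\epsilon$. Hence $\eta=\epsilon^2$.

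With that identification in hand, completeness is a sandwiching step. Given an arbitrary positive solution, set $\gamma=a+b\sqrt{d}>1$ with $N(\gamma)=-1$; minimality of $\epsilon$ gives $\gamma\geq\epsilon$, and since the odd powers $\epsilon^{2n-1}$ increase without bound there is a unique $n\geq 1$ with $\epsilon^{2n-1}\leq\gamma<\epsilon^{2n+1}$. Then $\delta=\gamma\,\epsilon^{-(2n-1)}$ has integer coordinates, satisfies $N(\delta)=(-1)(-1)^{-(2n-1)}=1$, and obeys $1\leq\delta<\epsilon^2=\eta$. If $\delta>1$ it would be a positive solution of $x^2-dy^2=1$ lying strictly below the fundamental solution $\eta$, which Theorem~\ref{t1} forbids; therefore $\delta=1$ and $\gamma=\epsilon^{2n-1}$, exactly as claimed.

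I expect the main obstacle to be the middle step, $\eta=\epsilon^2$: the inequality $\eta\leq\epsilon^2$ is immediate, but the reverse is what actually interlocks the two Pell equations, and it hinges on the conjugate computation $\epsilon^{-1}=-\overline{\epsilon}$ together with the sign bookkeeping that keeps every auxiliary element a genuine positive solution rather than merely a real number of the correct norm. Once the reduction $1\leq\delta<\eta$ is in place, the conclusion is forced by Theorem~\ref{t1}, so the real care is in checking positivity of coordinates at each stage (via the location of the conjugate in $(-1,0)$), ensuring that every ``smaller solution'' statement truly contradicts the stipulated minimality of $\epsilon$ and of $\eta$.
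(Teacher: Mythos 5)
Your proof is correct, but there is nothing in the paper to compare it against: Theorem \ref{t2} is stated without proof, as classical background in the ``Basic Setup'' section (the paper only attributes the later Theorems \ref{t3} and \ref{t4} to \cite{L}, and Theorems \ref{t1} and \ref{t2} are simply quoted and used). Your argument is the standard unit-norm proof and it is complete: the easy direction via $N(\epsilon^{2n-1})=(-1)^{2n-1}=-1$ with positivity of coordinates read off from $\overline{\epsilon^{2n-1}}=-\epsilon^{-(2n-1)}\in(-1,0)$; the key identification $\eta=\epsilon^2$ of the fundamental solution of $x^2-dy^2=1$, where the exclusion of $m\geq 3$ via $\mu=\epsilon^{(m-2)/m}\in(1,\epsilon)$ with $N(\mu)=-1$ is sound because $\mu=\epsilon\,\overline{\eta}$ has integer coordinates and any element $>1$ of norm $-1$ automatically has positive coordinates; and the sandwich $1\leq\delta=\gamma\,\epsilon^{-(2n-1)}<\eta$ with $N(\delta)=1$, which forces $\delta=1$ by Theorem \ref{t1}. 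Two minor remarks: in the case $m=2$ the contradiction is the norm mismatch $N(\eta)=1\neq -1=N(\epsilon)$ rather than the production of a smaller positive solution, so your summarizing phrase ``either way one produces a positive solution strictly smaller than $\epsilon$'' is slightly loose (though that case is itself handled correctly); and your appeal to Theorem \ref{t1} is legitimate within the paper's logical order, so what your write-up buys, compared with the paper's bare citation, is a self-contained deduction of Theorem \ref{t2} from Theorem \ref{t1} of exactly the kind found in the references \cite{N} and \cite{L}.
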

The following two theorems are given in \cite{L}.

     \begin{thm}\label{t3}
      If $ x_{1}, y_{1}$ is the fundamental solution of $ x^2 - d y^2 =4,$ then every positive solution of the equation is given by $ x_{n},$ $y_{n}$, where $x_n$ and $y_n$ are integers determined from $ x_{n} + y_{n} \sqrt{d} = \frac{(x_{1}+y_{1} \sqrt{d} )^n }{2^{n-1}}$ \ \ \ \ $n=1,2,3,\ldots$
 \end{thm}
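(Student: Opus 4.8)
The plan is to realize the solutions of $x^2 - dy^2 = 4$ as norm-one elements $\tfrac{x+y\sqrt d}{2}$ and to exploit the multiplicative group structure they carry, exactly as in the classical derivation underlying Theorem \ref{t1} but keeping track of the extra factor of $2$. First I would attach to each solution $(x,y)$ the real number $\alpha = \tfrac{x + y\sqrt d}{2}$, whose norm is $N(\alpha) = \tfrac{x^2 - dy^2}{4} = 1$; positive solutions with $x,y>0$ correspond precisely to the elements $\alpha > 1$, and conversely any number $\tfrac{X+Y\sqrt d}{2}$ with $X,Y\in\mathbb{Z}$ and norm $1$ returns a solution. Thus the theorem reduces to showing that every such $\alpha > 1$ is a positive integer power of the smallest one, $\alpha_1 = \tfrac{x_1 + y_1\sqrt d}{2}$.

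The first substantive step is closure. If $\alpha = \tfrac{x+y\sqrt d}{2}$ and $\beta = \tfrac{u+v\sqrt d}{2}$ both arise from solutions, then
$$\alpha\beta = \frac{(xu + dyv) + (xv + yu)\sqrt d}{4} = \frac{X + Y\sqrt d}{2}, \qquad X = \frac{xu+dyv}{2},\quad Y = \frac{xv+yu}{2},$$
and I must verify that $X,Y\in\mathbb{Z}$. This is where a parity analysis enters, split according to the residue of $d$ modulo $4$ (recalling that $d$ is squarefree, so $d$ is not divisible by $4$): reducing $x^2 - dy^2 = 4$ modulo $2$ forces $x$ and $y$ to share a common parity, and in each case one checks directly that $xu + dyv$ and $xv + yu$ are even. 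Since norms multiply, $X^2 - dY^2 = 4$ is then automatic, so the family of these elements is closed under multiplication and inversion and forms a group.

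With closure established, the enumeration proceeds by the standard well-ordering argument. Let $\alpha_1>1$ be the element of least value exceeding $1$, which is exactly the fundamental solution $(x_1,y_1)$. Given any solution $\alpha>1$, there is a unique $n\ge 1$ with $\alpha_1^{\,n} \le \alpha < \alpha_1^{\,n+1}$, so that $1 \le \alpha\,\alpha_1^{-n} < \alpha_1$ is again a norm-one element of our group; by minimality it must equal $1$, whence $\alpha = \alpha_1^{\,n}$. Translating back gives $\tfrac{x_n + y_n\sqrt d}{2} = \alpha_1^{\,n} = \tfrac{(x_1+y_1\sqrt d)^n}{2^n}$, that is, $x_n + y_n\sqrt d = \tfrac{(x_1+y_1\sqrt d)^n}{2^{\,n-1}}$, and the integrality of $x_n,y_n$ is supplied by the closure step.

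The main obstacle I anticipate is precisely the closure/parity argument, since that is the one place where the ``$4$'' genuinely departs from the classical proof: one must be certain that the half-integers $\tfrac{x+y\sqrt d}{2}$ multiply back into the same family, and this hinges delicately on $d$ modulo $4$ together with the common parity of $x$ and $y$. Once that is secured, the remainder is a transcription of the $N=1$ argument behind Theorem \ref{t1}.
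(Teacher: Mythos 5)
The paper does not prove this statement at all: Theorem \ref{t3} (together with Theorem \ref{t4}) is quoted verbatim from LeVeque \cite{L} with no argument given, so there is no internal proof to compare against. Your proposal is the standard proof of this classical fact and is essentially correct: the passage to norm-one elements $\alpha=\tfrac{x+y\sqrt d}{2}$, the parity-based closure, and the well-ordering argument pinning $\alpha=\alpha_1^n$ are exactly the right skeleton, and you correctly identified closure as the only step where the case $N=4$ differs from $N=1$. Two small points deserve attention. First, you invoke squarefreeness of $d$ to exclude $d\equiv 0\pmod 4$, but the theorem as stated assumes only that $d$ is a positive non-square; fortunately that case is harmless (indeed easier), since $x^2-dy^2=4$ with $4\mid d$ forces $x$ even, and then $xu+dyv$ and $xv+yu$ are visibly even, so your closure argument extends without the squarefree hypothesis --- in the paper's intended application $d_i^{\pm}$ is squarefree anyway. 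Second, your well-ordering step tacitly uses two standard facts you should at least name: that the set of norm-one elements $\alpha>1$ is discrete (so a least element exists), and that the fundamental solution in the paper's sense (least positive $x_1,y_1$) coincides with the least $\alpha>1$, which follows from the paper's own observation that for solutions of a fixed Pell equation, $a+b\sqrt d<r+s\sqrt d$ iff $a<r$ and $b<s$. With those remarks supplied, the proof is complete.
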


  \begin{thm}\label{t4}

   If $ x_{1}, y_{1}$ is the fundamental solution of $ x^2 - d y^2 =-4,$ then every positive solution of the equation is given by $ x_{n},$ $y_{n}$, where $x_n$ and $y_n$ are integers determined from $ x_{n} + y_{n} \sqrt{d} = \frac{(x_{1}+y_{1} \sqrt{d} )^n }{4^{n-1}}$ \ \ \ \ $n=1,2,3,\ldots$
   \end{thm}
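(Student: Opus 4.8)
The plan is to reduce Theorem~\ref{t4} to the multiplicative structure already exploited for $x^2-dy^2=\pm 1$ in Theorems~\ref{t1} and \ref{t2}, by passing to the quadratic integer $\theta=\frac{x+y\sqrt d}{2}$. The key observation is that a pair $(x,y)$ of positive integers solves $x^2-dy^2=-4$ precisely when $\theta$ satisfies $\theta\,\overline{\theta}=\frac{x^2-dy^2}{4}=-1$, where $\overline{\theta}=\frac{x-y\sqrt d}{2}$; thus the positive solutions correspond bijectively to the ``half-integers'' of norm $-1$ with positive coordinates. Writing $\theta_1=\frac{x_1+y_1\sqrt d}{2}$ for the element attached to the fundamental solution, the assertion becomes that every such element is an odd power of $\theta_1$, and the stated formula is merely the translation of $\theta_n=\theta_1^{\,2n-1}$ back into the variables $x_n,y_n$.

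First I would check the forward direction, that each odd power yields a genuine positive solution. Since $N(\theta_1)=-1$, we have $N\!\left(\theta_1^{\,2n-1}\right)=(-1)^{2n-1}=-1$, so $x_n^2-dy_n^2=4\,N\!\left(\theta_1^{\,2n-1}\right)=-4$. Moreover $\theta_1^{\,2n-1}=\frac{(x_1+y_1\sqrt d)^{2n-1}}{2^{2n-1}}=\frac{x_n+y_n\sqrt d}{2}$, whence $x_n+y_n\sqrt d=\frac{(x_1+y_1\sqrt d)^{2n-1}}{2^{2n-2}}=\frac{(x_1+y_1\sqrt d)^{2n-1}}{4^{\,n-1}}$, which is exactly the claimed expression and makes transparent the norm-compensating role of the denominator $4^{\,n-1}$. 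Positivity of both coordinates and the integrality of $x_n,y_n$ would then follow by induction on $n$ from $\theta_1^{\,2n+1}=\theta_1^{2}\cdot\theta_1^{\,2n-1}$, using that $\theta_1^{2}$ has integer trace and norm $+1$ (it is the fundamental solution side of $x^2-dy^2=4$ in Theorem~\ref{t3}).

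The harder direction, which I expect to be the main obstacle, is completeness: that no positive solution is omitted. Here I would invoke the discreteness of the unit group. Every unit $>1$ of the relevant order is a power of the fundamental unit $\eta=\theta_1$, and the norm distinguishes parity: $N(\eta^{m})=(-1)^{m}$, so the norm-$(-1)$ elements are exactly the odd powers. Concretely, given a positive solution $(u,v)$ with $\mu=\frac{u+v\sqrt d}{2}$ of norm $-1$, one picks $k$ with $\theta_1^{\,2k-1}\le\mu<\theta_1^{\,2k+1}$; then $\mu\,\theta_1^{-(2k-1)}$ has norm $+1$ and lies in $[1,\theta_1^{2})$, and the only norm-$+1$ unit in that interval is $1$, forcing $\mu=\theta_1^{\,2k-1}$ and hence $(u,v)=(x_k,y_k)$. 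The two delicate points are this sandwiching step — where one must confirm that ``positive coordinates'' is preserved and that $\theta_1$ really is the smallest admissible unit, so the interval argument applies — and the parity/integrality bookkeeping, namely verifying that the congruence $x_1\equiv y_1\pmod 2$ is propagated under multiplication so that $\frac{(x_1+y_1\sqrt d)^{2n-1}}{4^{\,n-1}}$ always has the form $x_n+y_n\sqrt d$ with $x_n,y_n\in\mathbb{Z}$. Both are structural transfers of the template already established for Theorems~\ref{t1}--\ref{t3} rather than new computations.
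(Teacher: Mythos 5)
You should first note that the paper contains no proof of Theorem~\ref{t4} to compare against: it is quoted as known background, the sentence preceding Theorems~\ref{t3} and \ref{t4} being ``The following two theorems are given in \cite{L}.'' Your proposal therefore supplies a proof where the paper only cites one, and the route you take --- identifying positive solutions of $x^2-dy^2=-4$ with elements $\theta=\frac{x+y\sqrt d}{2}$ of norm $-1$, getting existence from $N(\theta_1^{2n-1})=(-1)^{2n-1}$, and completeness from the sandwich $\theta_1^{2k-1}\le\mu<\theta_1^{2k+1}$ together with the fact that $1$ is the only norm-$(+1)$ unit in $[1,\theta_1^2)$ --- is the standard and correct one; the two points you flag as delicate (that $\theta_1$ is minimal among the relevant units $>1$, and that the parity condition $x_n\equiv y_n\pmod 2$ propagates so that $x_n,y_n$ stay integral) are indeed the only verifications needed, and both are routine for squarefree $d$, where $-4$ being represented forces $x_1\equiv y_1\pmod 2$ and, in the odd case, $d\equiv 1\pmod 4$, so that the half-integers form a ring closed under the powers you take. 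One substantive discrepancy deserves emphasis: the statement as printed in the paper, $x_n+y_n\sqrt d=\frac{(x_1+y_1\sqrt d)^{n}}{4^{n-1}}$, is false as written. For $d=5$ with fundamental solution $(1,1)$ and $n=2$ it yields $\frac{(1+\sqrt 5)^2}{4}=\frac{3+\sqrt 5}{2}$, which is not of the form $x+y\sqrt 5$ with $x,y\in\mathbb{Z}$ and has norm $+1$, so it cannot solve $x^2-5y^2=-4$; the exponent must be $2n-1$, in exact parallel with Theorem~\ref{t2} for $N=-1$. Your argument silently makes this correction --- you work with $\theta_n=\theta_1^{2n-1}$ throughout and derive $x_n+y_n\sqrt d=\frac{(x_1+y_1\sqrt d)^{2n-1}}{4^{n-1}}=2\theta_1^{2n-1}$, where $4^{n-1}=2^{2n-2}$ is precisely the factor restoring the element to the shape $x+y\sqrt d$ --- so what you prove is the classical theorem the paper intends to quote from \cite{L}, not the misprinted formula, and your proof is sound for that corrected statement.
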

The following theorems are given in \cite{Rbts}.

\begin{thm}\label{t8}
Let $d \equiv 2 \ (mod4)$  or $d\equiv3 \ (mod4).$ Then the equation $ x^2-d y^2= - 4 $ has no solution if and only if the equation $ x^2- dy^2=-1 $  has positive solutions.
\end{thm}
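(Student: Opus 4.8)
The plan is to prove the biconditional by a descent modulo $4$, treating the two admissible congruence classes of $d$ separately. First I would handle $d \equiv 3 \pmod 4$. Here $-d \equiv 1 \pmod 4$, so a hypothetical solution of $x^2 - dy^2 = -4$ satisfies $x^2 + y^2 \equiv 0 \pmod 4$; since a square is $\equiv 0$ or $1 \pmod 4$, this forces $x$ and $y$ both even. Writing $x = 2X$, $y = 2Y$ and dividing by $4$ gives $X^2 - dY^2 = -1$. For $d \equiv 2 \pmod 4$ I would run the same reduction: modulo $4$ an odd $y$ would force $x^2 \equiv 2 \pmod 4$, which is impossible, so $y$ is even, hence $x$ is even too, and dividing by $4$ again produces a solution of $X^2 - dY^2 = -1$.

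The converse is immediate, since from $X^2 - dY^2 = -1$ one gets $(2X)^2 - d(2Y)^2 = -4$; and positivity costs nothing, because for non-square $d$ neither equation has a solution with $x = 0$ or $y = 0$, so a solution may always be taken with $x, y > 0$. Combining the two directions, the descent establishes
\begin{equation*}
x^2 - dy^2 = -4 \text{ is solvable} \iff x^2 - dy^2 = -1 \text{ is solvable},
\end{equation*}
valid for all $d \equiv 2, 3 \pmod 4$. The one substantive step is the descent forcing $x$ and $y$ to be even; this is precisely where the hypothesis $d \not\equiv 0, 1 \pmod 4$ enters, and it is the only place any real work occurs.

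The step I expect to be the genuine obstacle, however, is reconciling this conclusion with the displayed wording. The argument pairs solvability with solvability, equivalently unsolvability with unsolvability, so the biconditional it proves reads: $x^2 - dy^2 = -4$ has no solution exactly when $x^2 - dy^2 = -1$ has \emph{no} positive solution. The literal statement instead pairs ``no solution'' of the first equation with ``has positive solutions'' of the second, and this cannot be correct: for $d = 2$ one has $1^2 - 2\cdot 1^2 = -1$ together with $2^2 - 2\cdot 2^2 = -4$, so both equations are solvable at once, violating the worded equivalence. I would therefore present the same-polarity statement above as the theorem, treating the missing ``no'' in the second clause as a typographical slip.
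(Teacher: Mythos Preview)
The paper does not actually prove this theorem: it is quoted, together with the two theorems that follow, as a background result taken from Robertson \cite{Rbts}. So there is no ``paper's own proof'' to compare against; your descent-mod-$4$ argument supplies a proof where the paper has none, and it is correct and complete for the corrected statement.

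You are also right about the wording. The statement as printed is false, and your example $d=2$ witnesses this. The intended assertion is the same-polarity equivalence you establish, namely that $x^2-dy^2=-4$ is solvable iff $x^2-dy^2=-1$ is solvable (for $d\equiv 2,3\pmod 4$). This reading is confirmed by the paper's own use of the theorem in the proof of Theorem~\ref{t12}: there the corollary to Theorem~\ref{t6} gives that $x^2-d_i^\pm y^2=-1$ has \emph{no} solutions, and the authors invoke Theorem~\ref{t8} to conclude that $x^2-d_i^\pm y^2=-4$ has no solutions either. That inference is valid only under the corrected statement, so your emendation matches the authors' intent.
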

\begin{thm}\label{t9}
Let $d\equiv0 \ (mod4).$ If $x_{1}+\frac{d}{4}y_{1}$ is the fundamental solution  of the equation  $ x^2-\frac{d}{4}y^2=1 $, then the fundamental solution of the equation $ x^2-dy^2=4  $ is given as $ (2x_{1},y_{1}).$
\end{thm}
\begin{thm}\label{t10}
Let $ d\not\equiv0 \ (mod4).$ If $x_{1}+y_{1} \sqrt{d} $ is the fundamental solution of the equation $ x^2-dy^2=1 $ then the fundamental solution of the equation $ x^2-dy^2=4 $ is $ (2x_{1} 2y_{1}).$
\end{thm}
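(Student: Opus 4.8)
The plan is to split the statement into an existence part and a minimality part. Existence is immediate: since $x_1^2 - dy_1^2 = 1$, we have $(2x_1)^2 - d(2y_1)^2 = 4(x_1^2 - dy_1^2) = 4$, so $(2x_1, 2y_1)$ is indeed a positive solution of $x^2 - dy^2 = 4$. All the work is in showing it is the \emph{smallest} positive solution, and the natural strategy is a descent: from an arbitrary positive solution $(u,v)$ of $x^2 - dy^2 = 4$ I would like to manufacture a positive solution of $x^2 - dy^2 = 1$, so that the already-established fundamentality of $(x_1,y_1)$ forces $u \ge 2x_1$ and $v \ge 2y_1$.

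To set up the descent I would examine the parities of $u$ and $v$ modulo $4$, which is exactly where the hypothesis $d \not\equiv 0 \pmod 4$ enters. From $u^2 - dv^2 = 4 \equiv 0 \pmod 4$ the mixed cases are excluded: if $u$ is even and $v$ odd then $u^2 - dv^2 \equiv -d \pmod 4$, which would force $d \equiv 0 \pmod 4$, contrary to hypothesis; and $u$ odd with $v$ even gives $u^2 - dv^2 \equiv 1 \pmod 4$, which cannot vanish. Hence $u \equiv v \pmod 2$. In the both-even case, writing $u = 2s$ and $v = 2t$ gives $s^2 - dt^2 = 1$, so $(s,t)$ is a positive solution of the norm-one equation; fundamentality of $(x_1,y_1)$ then yields $s \ge x_1$, $t \ge y_1$, hence $u \ge 2x_1$ and $v \ge 2y_1$, which is precisely the minimality required. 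This argument already closes the proof whenever every solution of $x^2 - dy^2 = 4$ has even coordinates, i.e. for $d \equiv 2,3 \pmod 4$.

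The step I expect to be the main obstacle is the remaining parity possibility allowed by the count, namely $u$ and $v$ both odd, which occurs only when $d \equiv 1 \pmod 4$. Here the descent to $x^2 - dy^2 = 1$ over $\mathbb{Z}$ is not available, because $\eta = \tfrac{u+v\sqrt d}{2}$ is a norm-one algebraic integer living in the larger ring $\mathbb{Z}\!\left[\tfrac{1+\sqrt d}{2}\right]$ rather than in $\mathbb{Z}[\sqrt d]$. The plan for controlling such odd solutions is to pass to a power: using $u^2 - dv^2 = 4$ one checks that $\eta^3$ again has integer coordinates and norm one, so $\eta^3 = (x_1 + y_1\sqrt d)^k$ for some $k \ge 1$, relating the smallest odd solution to the fundamental unit $x_1 + y_1\sqrt d$. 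The crux of the theorem is then to verify that no such odd $\eta$ produces a solution of $x^2 - dy^2 = 4$ lying below $(2x_1, 2y_1)$; this is a genuine index condition on $\mathbb{Z}[\sqrt d]^{\times}$ inside the full unit group, and it is exactly the point at which the hypotheses governing the admissible $d$ must be invoked. Once that condition is secured, the parity descent of the previous paragraph applies uniformly and identifies $(2x_1, 2y_1)$ as the fundamental solution.
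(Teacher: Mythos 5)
Your existence step, the parity analysis modulo $4$, and the both-even descent are all correct, and together they do prove the theorem whenever $d \equiv 2, 3 \pmod 4$: in those cases every solution of $x^2 - dy^2 = 4$ has both coordinates even, and halving reduces minimality to the already-known fundamentality of $(x_1, y_1)$. But your final paragraph is a plan rather than a proof, and the gap it leaves open is not closable: the statement as printed is in fact \emph{false} for general $d \equiv 1 \pmod 4$, which is exactly the both-odd case you flagged. Take $d = 5$: the fundamental solution of $x^2 - 5y^2 = 1$ is $(9, 4)$, so the theorem would predict $(18, 8)$ as fundamental for $x^2 - 5y^2 = 4$, yet $(3, 1)$ satisfies $9 - 5 = 4$ and is smaller. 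Your own machinery explains why: $\eta = \frac{3 + \sqrt 5}{2}$ is a norm-one unit of $\mathbb{Z}\bigl[\frac{1 + \sqrt 5}{2}\bigr]$ not lying in $\mathbb{Z}[\sqrt 5]$, and indeed $\eta^3 = 9 + 4\sqrt 5 = x_1 + y_1\sqrt 5$, so the index-$3$ phenomenon you anticipated actually occurs. The same happens for $d = 13$ with the odd solution $(11, 3)$. So the ``hypotheses governing the admissible $d$'' that you correctly sensed must be invoked simply do not exist in the statement — there is nothing to secure the index condition, and no argument can finish the proof as stated.

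For comparison with the source: the paper offers no proof at all — Theorem \ref{t10} is quoted without proof from \cite{Rbts} — so there is no internal argument to measure yours against; the hypothesis $d \not\equiv 0 \pmod 4$ appears to be a mis-transcription, and the correct hypothesis is $d \equiv 2, 3 \pmod 4$ (consistent with the hypothesis of Theorem \ref{t8}, and with the way the paper actually uses the result, e.g.\ in Theorem \ref{t11}, where the odd-$c$ case is explicitly reduced to $d \equiv 2, 3 \pmod 4$). Under that corrected hypothesis, your second paragraph alone — existence plus the parity descent — is already a complete and correct proof, and your third paragraph becomes unnecessary.
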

\section{Basic Setup 2}
A real binary quadratic form (or just a form) $F$ is a
polynomial in two variables $ x$ and $ y $ of the type
\begin{equation}\label{e1}
  F = F(x, y) = ax^2 + bxy + cy^2
\end{equation}
 with real coefficients $ a, b, c.$ We denote $F$ briefly by $ F = (a, b, c).$ The discriminant of $F$ is defined by the formula $ b^2 - 4ac $ and is denoted by $ \Delta.$ A quadratic form $F$ of discriminant $ \Delta $  is called indefinite if $ \Delta > 0,$ and is called integral if
and only if $ a, b, c \in \mathbb{Z}.$  An indefinite quadratic form $ F = (a, b, c) $ of discriminant $ \Delta $  is said to be reduced if
\begin{equation}\label{e2}
   |\sqrt{\Delta}-2|a||<b<\sqrt{ \Delta}
\end{equation}

Most properties of quadratic forms can be giving by the aid of extended
modular group $ \overline{\Gamma} $ (see \cite{Ma}). Gauss defined the group action of $ \overline{\Gamma} $ on the set of
forms as follows:
\begin{equation}\label{e3}
gF(x, y) =\ \ (ar^2 + brs + cs^2)x^2 + (2art  + bru + bts + 2csu) xy+(at^2 + btu + cu^2)y^2
\end{equation}
for $ g = \left(
          \begin{array}{cc}
            r  & s \\
            t & u \\
          \end{array}
        \right)\in \overline{\Gamma}.$
 An element $ g \in \overline{\Gamma} $ is called an automorphism of $F$ if $gF = F.$
If $\det g = 1,$ then $g$ is called a proper automorphism of $F$ and if $\det g = - 1,$ then
$g$ is called an improper automorphism of $F$. Let $ Aut(F)^+ $ denote the set of proper
automorphisms of $F$ and let $Aut(F)^- $ denote the set of improper automorphisms
of $F$ (for further details on binary quadratic forms see \cite{J1}, \cite{D}, \cite{De} and \cite{R1}).
Let $ \rho(F)$ denotes the normalization (it means that replacing $F$ by its normalization) of $(c,-b, a).$ To be more explicit, we set
\begin{equation}\label{e4}
    \rho ^{j+1}(F) = (c_{j},-b_{j} + 2c_{j}r_{j}, c_{j}r^2_{j} - b_{j}r_{j} + a_{j}),
\end{equation}
where
\begin{equation}\label{e5}
r_{j}=\left\{
  \begin{array}{ll}
   \mbox{sign}(c_{j}) \Big\lfloor \frac{b_{j}}{2|c_{j}|}\Big\rfloor, & \hbox{for $|c_{j}|\geq\sqrt{\Delta}$;} \\\\
    \mbox{sign}(c_{j}) \Big\lfloor \frac{b_{j}+\sqrt{\Delta}}{2|c_{j}|}\Big\rfloor, & \hbox{for $|c_{j}|<\sqrt{\Delta}$.}
  \end{array}
\right.
  \end{equation}

for $ j \geq 0.$ The number $r$ j's called the reducing number and the form $\rho ^{j+1}(F) $
is called the reduction of $F.$ Further if $F$ is reduced, then so is $\rho ^{j+1}(F) $. In
fact, $\rho$ is a permutation of the set of all reduced indefinite forms. Let $ \tau(F) =
\tau(a, b, c) = (-a, b,-c).$  Then the cycle of $F$ is the sequence $((\tau\rho)^j(G))$ for $ j\in  \mathbb{Z},$
where $G = (A,B,C)$ is a reduced form with $ A > 0 $ which is equivalent to $F.$
The cycle and proper cycle of $F$ is given by the following theorem \cite{J}:
\begin{thm}\label{s1}
 Let $F = (a, b, c)$ be reduced indefinite quadratic form of
discriminant $ \Delta.$ Then the cycle of $F$ is a sequence $ F_{0} \sim  F_{1} \sim  F_{2}\sim \ldots \sim  F_{l-1} $ of
length $l,$ where $ F_{0} = F = (a_{0}, b_{0}, c_{0}),$
\begin{equation}\label{e6}
  s_{j}= s(F_{j})=\Big\lfloor\frac{ b_{j}+\sqrt{\Delta}}{2|c_{j}|}\Big\rfloor \ \ \ \mbox {and}
\end{equation}
\begin{equation}\label{e7}
  F_{j+1}=(a_{j+1}, b_{j+1}, c_{j+1})=(|c_{j+}|,-b_{j} + 2|c_{j}|s_{j}, -(c_{j}s^2_{j} + b_{j}s_{j} + a_{j}))
\end{equation}
for $ 1 \leq i \leq l - 2.$ If $l$ is odd, then the proper cycle of $F$ is
$ F_{0} \sim  \tau F_{1} \sim  F_{2}\sim \tau F_{3} \cdots \sim \tau F_{l-2}\sim  F_{l-1} \sim  \tau F_{0} \sim   F_{1} \sim  \tau F_{2}\sim F_{3} \cdots \sim  \sim F_{l-2}\tau F_{l-1} $ of length $2l$. In this case the equivalence class of $F$ is equal to the proper equivalence class of $F$, and if $l$ is even, then the proper cycle of $F$ is
$ F_{0} \sim  \tau F_{1} \sim  F_{2}\sim \tau F_{3} \ldots \sim  F_{l-2}\sim  \tau F_{l-1}$
of length $l.$ In this case the equivalence class of $F$ is the disjoint union of the
proper equivalence class of $F$ and the proper equivalence class of $\tau(F)$.
\end{thm}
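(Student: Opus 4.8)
The plan is to build the cycle as a purely periodic orbit, read off the stated recursion, and then separate proper from full equivalence by an orientation count. First I would observe that \eqref{e2} makes the set of reduced forms of a fixed discriminant $\Delta$ finite: it forces $0<b<\sqrt{\Delta}$, and with $b^{2}-4ac=\Delta$ also $|a|,|c|<\sqrt{\Delta}$, leaving only finitely many triples $(a,b,c)$. The reduction $\rho$ of \eqref{e4}--\eqref{e5} and the map $\tau(a,b,c)=(-a,b,-c)$ both preserve $\Delta$ and reducedness (since $\tau$ alters neither $b$ nor $|a|$), so $\tau\rho$ permutes this finite set; as $\rho$ is invertible on reduced forms, $\tau\rho$ is a bijection and the orbit of $F_{0}=F$ is \emph{purely} periodic of some length $l$, with $(\tau\rho)^{l}F_{0}=F_{0}$. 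Evaluating \eqref{e4}--\eqref{e5} on a reduced form---where $|c_{j}|<\sqrt{\Delta}$, so the second branch of \eqref{e5} applies and $r_{j}=\operatorname{sign}(c_{j})\,s_{j}$ with $s_{j}$ as in \eqref{e6}---and then applying $\tau$ reproduces \eqref{e7} for $F_{j+1}$; this is a direct substitution.

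Next I would pin down the proper structure. A single reduction is \emph{proper}: the matrix $\left(\begin{smallmatrix}0&1\\-1&r_{j}\end{smallmatrix}\right)\in SL_{2}(\mathbb{Z})$ realises $\rho$ through the action \eqref{e3}, so $F_{j}\sim_{p}\rho F_{j}$, writing $\sim_{p}$ for proper equivalence. Since $\tau$ is an involution and $F_{j+1}=\tau\rho F_{j}$, we obtain the key identity $\rho F_{j}=\tau F_{j+1}$ and hence $F_{j}\sim_{p}\tau F_{j+1}$ for every $j$; this already exhibits the alternating chain $F_{0}\sim\tau F_{1}\sim F_{2}\sim\tau F_{3}\sim\cdots$ claimed for the proper cycle. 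To iterate it I would first prove a short lemma that $\tau$ descends to an involution on proper classes: as $\tau F=-F(x,-y)$ is negation composed with the substitution $\operatorname{diag}(1,-1)$, and negation commutes with every substitution while $\operatorname{diag}(1,-1)$ conjugates $SL_{2}(\mathbb{Z})$ into itself, one has $F\sim_{p}F'\Rightarrow\tau F\sim_{p}\tau F'$. Writing $[\,\cdot\,]_{p}$ for the proper class, the key identity then gives $[F_{j+1}]_{p}=\tau[F_{j}]_{p}$, so $[F_{j}]_{p}=\tau^{\,j}[F_{0}]_{p}$.

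Everything now turns on the parity of $l$. The closing relation $F_{l}=F_{0}$ gives $[F_{0}]_{p}=\tau^{\,l}[F_{0}]_{p}$. If $l$ is even, then $\tau^{\,l}=\mathrm{id}$ imposes no constraint: the even-indexed forms lie in $[F_{0}]_{p}$ and the odd-indexed ones in $\tau[F_{0}]_{p}=[\tau F_{0}]_{p}$, the length-$l$ list $F_{0},\tau F_{1},\dots,\tau F_{l-1}$ exhausts $[F_{0}]_{p}$, and the full class is the disjoint union $[F_{0}]_{p}\sqcup[\tau F_{0}]_{p}$. If $l$ is odd, then $\tau^{\,l}=\tau$ forces $[F_{0}]_{p}=\tau[F_{0}]_{p}$, i.e. $F_{0}\sim_{p}\tau F_{0}$; the class is a single proper class, the alternating pattern closes only after traversing the cycle twice, and the proper cycle has length $2l$, namely $F_{0}\sim\tau F_{1}\sim\cdots\sim F_{l-1}\sim\tau F_{0}\sim F_{1}\sim\cdots\sim\tau F_{l-1}$. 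The hard part is the equivalence $F_{0}\sim_{p}\tau F_{0}\iff l\text{ odd}$---equivalently, that $F_{0}$ admits an orientation-reversing automorphism exactly when $l$ is odd. I would settle it by identifying $(\tau\rho)^{l}$ with the fundamental automorphism of $F_{0}$ and counting its orientation: it is the composite of $l$ proper steps $\rho$ and $l$ orientation-reversing maps $\tau$, so it reverses orientation precisely when $l$ is odd. Equivalently, and this ties the statement to the paper's earlier analysis, $\tau$ corresponds to conjugation of the quadratic irrational attached to $F_{0}$, so the parity of $l$ is the parity of the period of its continued fraction, exactly as in Lemma \ref{l1}. Minimality of $l$ then guarantees both that the two proper classes are genuinely distinct in the even case and that the period truly doubles in the odd case.
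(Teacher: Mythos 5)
The first thing to note is that the paper contains no proof of this statement at all: it is imported background, stated with the citation \cite{J} (``The cycle and proper cycle of $F$ is given by the following theorem \cite{J}''), so there is no internal argument to compare yours against and your attempt must stand on its own. Judged on its own, your skeleton is the standard one and most of it is sound: reducedness \eqref{e2} bounds $b$ and (via $b^{2}-4ac=\Delta$) the product $|ac|$, so there are finitely many reduced forms of discriminant $\Delta$; $\rho$ and $\tau$ preserve discriminant and reducedness, so $\tau\rho$ permutes a finite set and the orbit of $F$ is purely periodic; the matrix $g=\left(\begin{smallmatrix}0&1\\-1&r_{j}\end{smallmatrix}\right)$ does realise $\rho$ under \eqref{e3} (first coefficient $F(0,1)=c_{j}$, middle $-b_{j}+2c_{j}r_{j}$, third $F(-1,r_{j})$, $\det g=1$), so each $\rho$-step is proper; your lemma that $\tau$ descends to proper classes is correct; and $[F_{j}]_{p}=\tau^{j}[F_{0}]_{p}$ follows. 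One point you gloss in deriving \eqref{e7}: along the cycle $a_{j}>0>c_{j}$, and it is $\operatorname{sign}(c_{j})=-1$ that converts $-b_{j}\operatorname{sign}(c_{j})s_{j}$ into $+b_{j}s_{j}$; routine, but it should be said.

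The genuine gap is exactly at the step you describe as settled by ``minimality of $l$'' and an orientation count, and it is the only hard content of the theorem. Minimality of the period says the $\tau\rho$-orbit has no earlier repetition; it does \emph{not} prevent $F_{0}\sim_{p}\tau F_{0}$ via some $SL_{2}(\mathbb{Z})$ matrix having nothing to do with the cycle, which would collapse the claimed disjoint union in the even case. Likewise your orientation count computes the determinant of the one transformation obtained by iterating the cycle; it cannot exclude other, unrelated proper equivalences unless you already know that \emph{every} proper equivalence between reduced forms is realised along the proper cycle. That completeness theorem (two reduced forms are properly equivalent iff each occurs in the proper cycle of the other, proved via the purely periodic continued fraction of the first root of $F$, or via chains of neighbouring forms) is precisely what you assert rather than prove, and the pointer to Lemma \ref{l1} --- a statement about Pell solutions, not about form classes --- does not supply it; note also that $[F_{0}]_{p}$ is infinite, so your ``exhausts $[F_{0}]_{p}$'' can only mean ``exhausts the reduced forms in $[F_{0}]_{p}$,'' which is again completeness. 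A secondary issue you should make explicit: under the bare substitution action \eqref{e3}, the improper class of $F=(a,b,c)$ is the proper class of $(a,-b,c)$, \emph{not} of $\tau F=-(a,-b,c)$, since negation is not a substitution; the theorem's class statements presuppose the sign-twisted action of the determinant $-1$ elements of $\overline{\Gamma}$, under which $\tau$ itself is an improper equivalence. With the naive reading the even-case claim is actually false: for $\Delta=24$ and $F=(1,4,-2)$ one has $l=2$ and $(1,-4,-2)\sim_{p}F$ (both translate to $(1,0,-6)$), yet $\tau F=(-1,4,2)$ is not properly equivalent to $F$ because $x^{2}-6y^{2}=-1$ is insolvable. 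So your outline is the right road map, but without the completeness theorem and the equivalence convention pinned down, the final dichotomy --- the actual assertion of the theorem --- is not proved.
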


  \section{Main Results}
  In this section, we will give our main results about the positive solutions of the Pell equations $x^2-d y^2=\pm 1,$ and  $ x^2-dy^2=\pm 4.$ for particular values of $d$. More precisely, for $d=d_i^\pm=a^{2k} b^{2l}\pm ic^m,$  all the positive solutions of the equation in terms of the generalized Fibonacci and Lucas sequences has been investigated. Throughout in this section $h$ will be a positive integer such that $h=\frac{a^kb^l}{c^m}$,
  because $c\mid ab$.
  \begin{thm}\label{t5}
   Let $h=\frac{a^kb^l}{c^m}$ be a positive integer, then  the continued fraction of \\
   \begin{description}
                  \item[i] $d_1^-$ has of the form $[a^k b^l  ; \overline{1,2h-2,1,2a^kb^l-2}]\ \ \ \ ab\geq2$
                   \item[ii]  $d_1^+$ has of the form  $[a^k b^l  ; \overline{2h,2a^k b^l}]$
                  \item[iii] $d_2^-$ has of the form $[a^k b^l-1  ; \overline{1,h-2,1,2a^kb^l-2}]\ \ \ \ ab\geq3$
                   \item[iv]  $d_2^+$ has the form  $[a^k b^l  ; \overline{h,2a^k b^l}]$
                      \end{description}
     \end{thm}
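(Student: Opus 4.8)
The plan is to run the continued-fraction algorithm for $\sqrt{d_i^\pm}$ explicitly, using the standard data $\xi_j=\dfrac{P_j+\sqrt{d}}{Q_j}$ with $P_0=0$, $Q_0=1$ and the recursions $a_j=\lfloor\xi_j\rfloor$, $P_{j+1}=a_jQ_j-P_j$, $Q_{j+1}=\dfrac{d-P_{j+1}^2}{Q_j}$. The hypothesis that $h=a^kb^l/c^m$ is a positive integer (which holds because $c\mid ab$ and $k,l\ge m$) lets me write $a^kb^l=hc^m$, so that $d_i^\pm=h^2c^{2m}\pm ic^m$; this single identity is what makes every $Q_{j+1}$ come out as an integer with a clean closed form, and it is the engine of the whole computation.

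First I would pin down $a_0=\lfloor\sqrt{d_i^\pm}\rfloor$ using the crude trapping $a^kb^l-1<\sqrt{d_i^-}<a^kb^l<\sqrt{d_i^+}<a^kb^l+1$. For the two $+$ cases this gives $a_0=a^kb^l$, and for the two $-$ cases the inequality $2a^kb^l-1\ge ic^m$ yields $a_0=a^kb^l-1$; this is precisely why the period terminates in $2a_0=2a^kb^l$ in (ii) and (iv), and in $2a_0=2a^kb^l-2$ in (i) and (iii).

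Next I would compute the pairs $(P_j,Q_j)$ through one full period. In the short cases (ii) and (iv) the computation closes after two steps: for $d_1^+$, say, one finds $(P_1,Q_1)=(a^kb^l,c^m)$ with $a_1=2h$, then $(P_2,Q_2)=(a^kb^l,1)$ with $a_2=2a_0$, and $Q_2=Q_0=1$ forces the period $\overline{2h,\,2a^kb^l}$. In the long cases (i) and (iii) the same mechanism produces a four-term period: the key algebraic simplification is that $d-P_{j+1}^2$ always factors as $Q_j$ times a short expression — for instance in case (i) one has $d_1^--P_2^2=c^m\big[(2h-1)c^m-1\big]$ while $Q_1=(2h-1)c^m-1$, forcing $Q_2=c^m$ — and one checks that after four steps the pair $(P_4,Q_4)=(a^kb^l-1,1)$ reproduces the starting data, closing the period as $\overline{1,\,2h-2,\,1,\,2a^kb^l-2}$. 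Cases (iii) and (iv) run identically with $i=2$.

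The step I expect to be the real work is evaluating each floor $a_j=\lfloor(P_j+\sqrt{d_i^\pm})/Q_j\rfloor$ exactly: because $\sqrt{d_i^\pm}$ is irrational and lies just below (for $d_i^-$) or just above (for $d_i^+$) the integer $a^kb^l$, each such floor must be trapped between consecutive integers by an elementary inequality in $h$ and $c^m$. Verifying that every resulting partial quotient is a \emph{positive} integer — in particular that the entries $2h-2$ and $h-2$ do not vanish — is exactly what forces the side conditions $ab\ge2$ and $ab\ge3$, and this bookkeeping of ranges is the only delicate part of the argument. Once it is done, periodicity is automatic from the reappearance of the pair $(P_1,Q_1)$, and the four claimed expansions follow.
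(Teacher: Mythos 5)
Your proposal is correct and is essentially the paper's own argument: the paper computes the same four expansions by repeatedly rationalizing $\sqrt{d_i^\pm}$ and extracting integer parts (for $d_1^-$ and $d_2^+$ explicitly, declaring the other two cases similar), and your $(P_j,Q_j)$ recursion is precisely the standard bookkeeping for those same complete quotients, closing each period when the pair recurs. The only difference is presentational --- your version makes the floor evaluations and the origin of the side conditions $ab\geq 2$, $ab\geq 3$ explicit, checks the paper leaves implicit in its nested-fraction manipulation.
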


 \begin{proof}
 For $d_1^-,$ the continued fraction
 \begin{eqnarray*}
    \sqrt{a^{2k} b^{2l} - c^m } &=& a^kb^l-1 + \sqrt{a^{2k}b^{2l} -c^m}-(a^kb^l-1) \\
                &=& a^kb^l-1 + \frac{ (\sqrt{a^{2k}b^{2l} -c^m}-(a^kb^l-1)) (\sqrt{a^{2k}b^{2l} -c^m}+a^kb^l-1)}{\sqrt{a^{2k}b^{2l} -c^m}+a^kb^l-1}\\
    &=& a^kb^l-1 +  \frac{1}{\frac{\sqrt{a^{2k}b^{2l} -c^m}+a^kb^l-1}{2a^kb^l-c^m-1}}\\
                &=& a^kb^l-1 +  \frac{1}{1+\frac{ \sqrt{a^{2k}b^{2l} -c^m} - (a^kb^l-c^m)}{2a^kb^l-c^m-1}}\\
                &=& a^kb^l-1 +  \frac{1}{1+\frac{ c^m}{\sqrt{a^{2k}b^{2l} -c^m} + a^kb^l-c^m}}\\
                &=& a^kb^l-1 +  \frac{1}{1+\frac{1}{\frac{\sqrt{a^{2k}b^{2l} -c^m} + a^kb^l-c^m}{c^m}}}\\
                &=& a^kb^l-1 +  \frac{1}{1+\frac{1}{2h-2+\frac{\sqrt{a^{2k}b^{2l} -c^m} -( a^kb^l-c^m)}{2a^kb^l-c^m-1}}}\\
                &=& a^kb^l-1 +  \frac{1}{1+\frac{1}{2h-2+\frac{1}{\frac{\sqrt{a^{2k}b^{2l} -c^m} + (a^kb^l-c^m)}{2a^kb^l-c^m-1}}}}\\
                &=& a^kb^l-1 +  \frac{1}{1+\frac{1}{2h-2+\frac{1}{1+\frac{\sqrt{a^{2k}b^{2l} -c^m} - (a^kb^l-1)}{2a^kb^l-c^m-1}}}}\\
                &=& a^kb^l-1 +  \frac{1}{1+\frac{1}{2h-2+\frac{1}{1+\frac{1}{\sqrt{a^{2k}b^{2l} -c^m} + (a^kb^l-1)}}}}\\
                &=& a^kb^l-1 +  \frac{1}{1+\frac{1}{2h-2+\frac{1}{1+\frac{1}{2a^kb^l-2+\sqrt{a^{2k}b^{2l} -c^m} - (a^kb^l-1)}}}} \\
 \end{eqnarray*}
   Hence $\sqrt{a^{2k} b^{2l}-c^m}$  has the continued fraction of the form $[a^k b^l-1 ; \overline{1,2h-2,1,2a^k b^l-2}].$
  \\Similarly for $d_2^-$, one can obtained the required form of the continued fraction.    \\
   For $d_2^+,$ the continued fraction
   \begin{eqnarray*}
     \sqrt{a^{2k} b^{2l} + 2 c^m } &=& a^k b^l + (\sqrt{a^{2k} b^{2l} + 2 c^m }-a^k b^l )\\
      &=& a^k b^l + \frac{(\sqrt{a^{2k} b^{2l} + 2 c^m} -a^k b^l )(\sqrt{a^{2k} b^{2l} + 2c^m }+a^k b^l)}{\sqrt{a^{2k} b^{2l} + 2c^m } + a^k b^l}\\
      &=& a^k b^l+\frac{1}{\frac{\sqrt{a^{2k} b^{2l} + 2c^m}+a^k b^l} {2c^m}}= a^k b^l+\frac{1}{h+\frac{\sqrt{a^{2k} b^{2l}+ 2c^m}-a^k b^l}{2c^m}}\\
      &=& a^k b^l+\frac{1}{h+\frac{1}{\sqrt{a^{2k} b^{2l}+ 2c^m}-a^kb^l}}=   a^k b^l\frac{1}{h+\frac{1}{2a^k b^l +\sqrt{a^{2k} b^{2l}+2c^m }-a^k b^l}}
   \end{eqnarray*}
    Hence $\sqrt{a^{2k} b^{2l}+2c^m}$  has the continued fraction of the form $[a^k b^l ; \overline{h,2a^k b^l}].$
    \\Similarly for $d_1^+$, one can obtained the required form of the continued fraction.
  \end{proof}

\begin{cor}
  If $c=1$,  then the continued fraction of
   \begin{description}
     \item[i] $d_1^-$ has of the form $[a^k b^l  ; \overline{1,2a^kb^l-2}]$
                   \item[ii]  $d_1^+$ has the form  $[a^k b^l  ; \overline{2a^k b^l}]$
\item[iii] $d_2^-$ has of the form $[a^k b^l -1 ; \overline{1,a^kb^l-2,1,2a^kb^l-2}]$
                   \item[iv]  $d_1^+$ has the form  $[a^k b^l  ; \overline{a^kb^l,2a^k b^l}]$
                   \end{description}
     \end{cor}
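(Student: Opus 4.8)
The plan is to specialize Theorem~\ref{t5} to the case $c=1$. Setting $c=1$ gives $c^m=1$ for every exponent $m$, so the auxiliary integer of Theorem~\ref{t5} collapses to $h=\frac{a^kb^l}{c^m}=a^kb^l$. Since $h$ is the only quantity through which $c$ enters the four continued-fraction expansions, the corollary will follow by substituting $h=a^kb^l$ into each expansion and then simplifying the resulting periodic word. Thus nothing new needs to be computed from $\sqrt{d_i^\pm}$ directly; the whole argument is a substitution followed by a check on period length.

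First I would dispose of the two parts in which no collapse occurs. In part~(iii) the only affected partial quotient is $h-2$, which becomes $a^kb^l-2$, so Theorem~\ref{t5}(iii) yields at once $[a^kb^l-1;\overline{1,a^kb^l-2,1,2a^kb^l-2}]$; the four-term block does not shorten, because equality of its two halves would force $a^kb^l-2=2a^kb^l-2$, i.e.\ $a^kb^l=0$, which is impossible. In part~(iv), substituting $h=a^kb^l$ into $[a^kb^l;\overline{h,2a^kb^l}]$ gives $[a^kb^l;\overline{a^kb^l,2a^kb^l}]$ with no further simplification.

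Next I would treat parts~(i) and~(ii), where the period genuinely shortens. In part~(i) the block $\overline{1,2h-2,1,2a^kb^l-2}$ becomes $\overline{1,2a^kb^l-2,1,2a^kb^l-2}$, which is the two-term word $1,\,2a^kb^l-2$ written twice; since writing a periodic block twice yields the same infinite tail, this equals $\overline{1,2a^kb^l-2}$, and the fundamental period drops from four to two. Likewise in part~(ii) the block $\overline{2h,2a^kb^l}$ becomes $\overline{2a^kb^l,2a^kb^l}$, a single term repeated, so the minimal period is $\overline{2a^kb^l}$ and its length drops from two to one.

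The one point requiring care — the ``main obstacle,'' modest as it is — is precisely this reduction of the period length in parts~(i) and~(ii): one must recognize that after the substitution the repeating block displayed in Theorem~\ref{t5} is itself a shorter block written twice, so that the true fundamental period is half of what a literal substitution suggests. The complementary check is to confirm that no such collapse occurs in parts~(iii) and~(iv), which is exactly the inequality of the block-halves noted above, valid for every admissible choice of $a,b,k,l$.
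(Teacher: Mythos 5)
Your proposal is correct and follows essentially the same route as the paper, which states this corollary without proof as an immediate specialization of Theorem~\ref{t5}: set $c=1$, so $h=a^kb^l$, and substitute into the four expansions. Your additional observation that in parts~(i) and~(ii) the substituted block is a shorter word written twice, so the displayed period halves (from $\overline{1,2a^kb^l-2,1,2a^kb^l-2}$ to $\overline{1,2a^kb^l-2}$ and from $\overline{2a^kb^l,2a^kb^l}$ to $\overline{2a^kb^l}$), is exactly the one step the paper leaves implicit, and you verify it correctly.
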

\begin{rk}
The continued fraction of $d_3=\sqrt{a^{2k}-a^k}$ is of the form  $[a^k-1  ; \overline{2,2a^k -2}]$
\end{rk}
 \begin{thm}\label{t6}
 \begin{description}
                   \item[i] Let us consider the Pell equation $ x^2-d_1^\pm y^2=1,$ then the fundamental solution $ (x_{1}^{\pm 1},y_{1}^{\pm 1} )$ is of the form $(2h a^k b^l \pm 1 ,2h)$ and the other solutions are $ (x_{n}^{\pm1}, y_{n}^{\pm1}),$ where $$\frac{x_{n}^{+1}}{y_{n}^{+1}}=[a^kb^l;{\underbrace{2h,2a^kb^l}_{ (n-1) time}},2h]\ \ \ \ \ \ \ \ \ \ \mbox{and }$$ $$\frac{x_{n}^{-1}}{y_{n}^{-1}}=[a^kb^l-1;{\underbrace{1,2h-2,1,2a^kb^l-2}_{(n-1) time}},1]\ \ \ \ \ \ \ \ \ \ \mbox{ respectively.}$$
                  \item[ii]   Let us consider the Pell equation $ x^2-d_2^\pm y^2=1,$ the fundamental solution $ (x_{1}^{\pm2},y_{1}^{\pm2} )$ is of the form $(h a^k b^l \pm1 ,h)$ and the other solutions are $ (x_{n}^{\pm2}, y_{n}^{\pm2}),$ where $$\frac{x_{n}^{+2}}{y_{n}^{+2}}=[a^kb^l;{\underbrace{h,2a^kb^l}_{ (n-1) time}},h] \ \ \ \ \ \ \ \ \ \ \mbox{and }$$ $$\frac{x_{n}^{-2}}{y_{n}^{-2}}=[a^kb^l-1;{\underbrace{1,h-2,1,2a^kb^l-2}_{(n-1) time}},1]\ \ \ \ \ \ \ \ \ \ \mbox{ respectively. }$$
                 \end{description}
\end{thm}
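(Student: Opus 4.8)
The plan is to extract everything from the continued-fraction expansions already proved in Theorem~\ref{t5} and then push them through Lemma~\ref{l1} and Theorem~\ref{t1}. First I would record the period length $\ell$ in each case: by Theorem~\ref{t5} the expansions of $\sqrt{d_1^+}$ and $\sqrt{d_2^+}$ have period $\ell=2$, while those of $\sqrt{d_1^-}$ and $\sqrt{d_2^-}$ have period $\ell=4$. Since $\ell$ is even in all four cases, the second bullet of Lemma~\ref{l1} is vacuous and the first bullet identifies the fundamental solution of $x^2-d_i^\pm y^2=1$ with the convergent $(p_{\ell-1},q_{\ell-1})$.

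Next I would compute these convergents from the recursion $p_j=a_jp_{j-1}+p_{j-2}$, $q_j=a_jq_{j-1}+q_{j-2}$ using the partial quotients supplied by Theorem~\ref{t5}. The period-$2$ cases are immediate: $p_1/q_1=[a^kb^l;2h]=(2ha^kb^l+1)/(2h)$ for $d_1^+$ and $[a^kb^l;h]=(ha^kb^l+1)/h$ for $d_2^+$. For the period-$4$ cases I would run the recursion through $a_1=1$, $a_2=2h-2$ (resp. $h-2$), $a_3=1$ to reach $(p_3,q_3)=(2ha^kb^l-1,2h)$ and $(ha^kb^l-1,h)$. As an independent confirmation that these pairs are genuine solutions I would substitute them into the form and invoke the defining relation $hc^m=a^kb^l$ (hence $h^2c^m=ha^kb^l$): in each case the $a^{2k}b^{2l}$ terms cancel and the surviving cross term $\pm 4ha^kb^l$ (resp. $\pm 2ha^kb^l$) is exactly annihilated by $\mp 4h^2c^m$ (resp. $\mp 2h^2c^m$), leaving the value $1$.

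For the remaining positive solutions I would appeal to Theorem~\ref{t1}, which states that every positive solution is $(x_n,y_n)$ with $x_n+y_n\sqrt{d_i^\pm}=(x_1+y_1\sqrt{d_i^\pm})^n$. The bridge to continued fractions is the standard period identity $p_{n\ell-1}^2-d\,q_{n\ell-1}^2=(-1)^{n\ell}$; as $\ell$ is even this equals $1$ for every $n$, so the convergents at indices $n\ell-1$ are precisely the positive solutions and must coincide with the powers produced by Theorem~\ref{t1}. Writing $p_{n\ell-1}/q_{n\ell-1}=[a_0;a_1,\dots,a_{n\ell-1}]$ and substituting the periodic partial quotients then yields the displayed finite continued fractions: $n-1$ complete copies of the period block followed by that block with its last entry (the $2a^kb^l$-type term) removed.

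The step I expect to be the real obstacle is this last identification, namely proving that $(x_1+y_1\sqrt{d_i^\pm})^n$ equals the convergent at index $n\ell-1$ rather than merely agreeing with it for small $n$. I would settle it by induction on $n$: setting $u=x_1+y_1\sqrt{d_i^\pm}$, one shows that advancing the convergent index by one full period, i.e. passing from $(p_{j-1},q_{j-1})$ to $(p_{j+\ell-1},q_{j+\ell-1})$, acts on $\mathbb{Z}[\sqrt{d_i^\pm}]$ as multiplication by $u$; applied to $p_{n\ell-1}+q_{n\ell-1}\sqrt{d_i^\pm}=u^n$ this reproduces the same statement with $n+1$ in place of $n$. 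One bookkeeping point deserves care in the $-$ cases: the tail after the $n-1$ repeated blocks must be the truncated block $1,2h-2,1$ (resp. $1,h-2,1$), not a single $1$, since already at $n=1$ this tail has to reproduce the fundamental convergent $(p_3,q_3)$ found above.
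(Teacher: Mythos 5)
Your proposal is correct and takes essentially the same route as the paper: Theorem \ref{t5} plus Lemma \ref{l1} give the fundamental solutions as the convergents $(p_{\ell-1},q_{\ell-1})$ for even period $\ell$, and your induction that advancing by one full period acts as multiplication by $x_1^{\pm i}+y_1^{\pm i}\sqrt{d_i^{\pm}}$ is exactly the content of the paper's computation culminating in \eqref{e8}, just phrased via the period-block matrix instead of peeling the nested fraction directly. Your final bookkeeping point is also a genuine (and correct) repair of the statement itself: in the minus cases the tail after the $n-1$ repeated blocks must be the truncated block $1,2h-2,1$ (resp.\ $1,h-2,1$), since the printed tail ``$1$'' already fails at $n=1$, where $[a^kb^l-1;1]=a^kb^l/1$ is not the fundamental solution $(2ha^kb^l-1,2h)$ --- a discrepancy the paper never confronts because its proof works only the $d_i^{+}$ cases and dismisses $d_i^{-}$ with ``similarly.''
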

\begin{proof}
The period length of the continued fraction of $\sqrt{d_i^+}$ is $ 2$ by  theorem \ref{t5}. Since $ p_{-2}=0 ,p_{-1}=1,q_{-2}=1,q_{-1}=0,\ \ p_{k}=a_{k} p_{k-1}+p_{k-2},       q_{k}=a_{k}q_{k-1}+q_{k-2},$ therefore the fundamental solution is of the form $p_{1}^{+i} = a_{1}p_{0}+p_{-1}=2ha^k b^l+1,  q_{1}^{+i}=a_{1}q_{0}+q_{-1}=2h$ for $d_1^+$  by using  lemma \ref{l1}. Similarly the equation
 $ x^2-d_2^+ y^2 = 1 $ has the required solution form due to the theorem \ref{t5} and lemma \ref{l1}.

 Now we assume that $(x_{n-1} , y_{n-1}) $ is a solution, that is, $ x_{n-1}^2-d_2^+y^2_{n-1} = 1.$
Then we have that\\
$ \frac{x_{n}^{+2}}{y_{n}^{+2}}=a^kb^l +\frac{1}{h+\frac{1}{2a^kb^l
+\frac{1}{\ddots \\
\\ \ 2a^kb^l+\frac{1}{h}}}} \\
=a^kb^l +\frac{1}{h+\frac{1}{a^kb^l +a^kb^l +
\frac{1}{h +\frac{1}{\ddots\\
\ \ \  2a^kb^l+\frac{1}{h}}}}}
=a^kb^l+\frac{1}{h+\frac{1}{a^kb^l +\frac{x_{n-1}}{y_{n-1}}}}
 =\frac{x_{n}}{y_{n}}=\frac{(ha^kb^l+1) x_{n-1}+hd{y_{n-1}}}{hx_{n-1}+(ha^kb^l+1) y_{n-1}}$
 \begin{equation}\label{e8}
 \frac{x_{n}^{+2}}{y_{n}^{+2}}=\frac{x_1^{+2} x_{n-1}+y_1^{+2}d{y_{n-1}}}{y_1^{+2}x_{n-1}+x_1^{+2} y_{n-1}}
 \end{equation}

Similarly for $d_1^+$ and $d_i^-$ we can get all positive solutions of the required form.

\end{proof}
 \begin{cor}
     The equation $ x^2-d_i^\pm y^2=-1$ has no positive integer solutions, except $ x^2-d_1^+ y^2=-1$ has solution $(a^{k}b^{l},1)$ only if $c=1.$
       \end{cor}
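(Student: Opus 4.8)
The plan is to read the period lengths off Theorem \ref{t5} and then apply the parity criterion of Lemma \ref{l1}. First I would record that Theorem \ref{t5} gives four continued fraction expansions whose periods are $\overline{2h,2a^kb^l}$ for $d_1^+$ and $\overline{h,2a^kb^l}$ for $d_2^+$ (length $2$), and $\overline{1,2h-2,1,2a^kb^l-2}$ for $d_1^-$ and $\overline{1,h-2,1,2a^kb^l-2}$ for $d_2^-$ (length $4$). In all four cases the period length is even. By the first bullet of Lemma \ref{l1}, an even period forces $x^2-dy^2=-1$ to be unsolvable, which immediately yields that $x^2-d_i^\pm y^2=-1$ has no positive integer solutions for every admissible $c$. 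This disposes of the main assertion.

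For the stated exception I would observe that $c=1$ is precisely the case $h=a^kb^l$ (since $h=a^kb^l/c^m$), so the leading partial quotient $2h$ in the period of $\sqrt{d_1^+}$ coincides with $2a^kb^l=2a_0$. Hence the period-$2$ pattern $\overline{2h,2a^kb^l}$ collapses to the period-$1$ pattern $\overline{2a^kb^l}$, giving $\sqrt{a^{2k}b^{2l}+1}=[a^kb^l;\overline{2a^kb^l}]$ exactly as recorded in the Corollary following Theorem \ref{t5}. The period length is now odd, so the second bullet of Lemma \ref{l1} applies and the fundamental solution of $x^2-d_1^+y^2=-1$ is $(p_{l-1},q_{l-1})=(p_0,q_0)$. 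Since $p_0=a^kb^l$ and $q_0=1$, the solution is $(a^kb^l,1)$, which I would confirm by the direct check $(a^kb^l)^2-(a^{2k}b^{2l}+1)=-1$.

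Finally, to guarantee that $d_1^+$ with $c=1$ is genuinely the only exception, I would verify that for $c=1$ the remaining three expansions retain even period: for $d_1^-$ the length collapses from $4$ to $2$ (the block $1,2a^kb^l-2$ repeats), while for $d_2^\pm$ no such collapse occurs, so their lengths stay $4$ and $2$. All of these are even, so Lemma \ref{l1} continues to forbid a solution of $x^2-d_i^\pm y^2=-1$ in those cases.

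There is no serious obstacle; the argument is essentially bookkeeping on the parity of the period lengths furnished by Theorem \ref{t5}. The one point demanding care is the period collapse: because Lemma \ref{l1} is sensitive only to the \emph{minimal} period, I must check that $2h=2a^kb^l$ truly shortens the $d_1^+$ period from $2$ to $1$, and that the analogous coincidences for the other three expansions do not produce an odd minimal period. Once this minimality is pinned down, the Corollary follows at once.
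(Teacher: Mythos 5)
Your proposal is correct and follows essentially the same route as the paper: read the period lengths of $\sqrt{d_i^\pm}$ from Theorem \ref{t5}, apply the parity criterion of Lemma \ref{l1} to rule out $x^2-d_i^\pm y^2=-1$ in the even cases, and note that for $c=1$ the expansion of $\sqrt{d_1^+}$ becomes $[a^kb^l;\overline{2a^kb^l}]$ of odd period, yielding the solution $(p_0,q_0)=(a^kb^l,1)$. If anything, you are more careful than the paper, which asserts the $c=1$ parity change without verifying that the period of $\sqrt{d_1^+}$ genuinely collapses to length $1$ while the other three expansions (including the $d_1^-$ collapse from $4$ to $2$) retain even minimal period.
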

\begin{proof}
The continued fraction of $\sqrt{d_i^\pm}$ have even length, therefore the equation $ x^2-d_i^\pm y^2 = -1 $ has no solution by lemma \ref{l1}, but if $c=1$, then $\sqrt{d_1^+}$ have odd length, therefore the equation has solution by lemma \ref{l1} and get required solution.
\end{proof}

\begin{thm}\label{s1}
The $n^{th} $ integer solution $ (x_{n}^{\pm i}, y_{n}^{\pm i}) $ of $x^2-d_i^\pm y^2 = -1$  can be given as a linear combination of
$x_i^{\pm i},y_i^{\pm i}$ and $d_i^\pm$ namely, for $ n\geq 2 $
\begin{eqnarray*}
  x_{n}^{\pm i} &=& x_1^{\pm i} x_{n-1}+y_1^{\pm i}d_i^\pm y_{n-1} \\
  y_{n}^{\pm i}&=& y_1^{\pm i}x_{n-1}+x_1^{\pm i}y_{n-1}
\end{eqnarray*}
and also satisfy the recurrence relation for $ n\geq 4 $
\begin{eqnarray*}
  x_{n}^{\pm i} &=&  (2x_1^{\pm i}-1)(x_{n-1} + x_{n-2})-x_{n-3}\\
  y_{n}^{\pm i} &=& (2x_1^{\pm i}-1)(y_{n-1} + y_{n-2})-y_{n-3}
\end{eqnarray*}
\end{thm}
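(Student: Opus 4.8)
The statement concerns the successive solutions $(x_n^{\pm i}, y_n^{\pm i})$ of the Pell equation $x^2 - d_i^\pm y^2 = -1$, and the plan is to extract both displayed pairs of identities from a single source: the multiplicative relation linking one solution to the next. Writing $\varepsilon = x_1^{\pm i} + y_1^{\pm i}\sqrt{d_i^\pm}$ for the fundamental solution, $\overline{\varepsilon} = x_1^{\pm i} - y_1^{\pm i}\sqrt{d_i^\pm}$ for its conjugate, and $N = \varepsilon\,\overline{\varepsilon} = (x_1^{\pm i})^2 - d_i^\pm (y_1^{\pm i})^2$ for the norm, the consecutive solutions are tied by $x_n^{\pm i} + y_n^{\pm i}\sqrt{d_i^\pm} = \varepsilon\,(x_{n-1}^{\pm i} + y_{n-1}^{\pm i}\sqrt{d_i^\pm})$, which is exactly equation \eqref{e8} and Theorem \ref{t6} recast for general $i$ and sign. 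Expanding the product and separating the rational part from the coefficient of $\sqrt{d_i^\pm}$ gives at once $x_n^{\pm i} = x_1^{\pm i} x_{n-1}^{\pm i} + y_1^{\pm i} d_i^\pm y_{n-1}^{\pm i}$ and $y_n^{\pm i} = y_1^{\pm i} x_{n-1}^{\pm i} + x_1^{\pm i} y_{n-1}^{\pm i}$ for $n \geq 2$; this first assertion is routine.

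For the recurrence I would first manufacture a second-order relation by pairing this up-step with the matching down-step $x_{n-2}^{\pm i} + y_{n-2}^{\pm i}\sqrt{d_i^\pm} = N^{-1}\,\overline{\varepsilon}\,(x_{n-1}^{\pm i} + y_{n-1}^{\pm i}\sqrt{d_i^\pm})$, which comes from the identity $\overline{\varepsilon}\,\varepsilon^{\,n-1} = N\,\varepsilon^{\,n-2}$. Separating parts again, the down-step reads $x_{n-2}^{\pm i} = N^{-1}(x_1^{\pm i} x_{n-1}^{\pm i} - d_i^\pm y_1^{\pm i} y_{n-1}^{\pm i})$, with the analogous expression for $y_{n-2}^{\pm i}$. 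Adding the up-step to $N$ times the down-step cancels the $y_{n-1}^{\pm i}$-contribution and leaves $x_n^{\pm i} = 2x_1^{\pm i} x_{n-1}^{\pm i} - N x_{n-2}^{\pm i}$, and identically $y_n^{\pm i} = 2x_1^{\pm i} y_{n-1}^{\pm i} - N y_{n-2}^{\pm i}$. This two-term relation is the engine for everything that follows.

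It remains to match this with the stated third-order form. Using the second-order relation at index $n-1$ in the guise $2x_1^{\pm i} x_{n-2}^{\pm i} = x_{n-1}^{\pm i} + N x_{n-3}^{\pm i}$, a direct expansion yields $(2x_1^{\pm i}-1)(x_{n-1}^{\pm i}+x_{n-2}^{\pm i}) - x_{n-3}^{\pm i} = x_n^{\pm i} + (N-1)(x_{n-2}^{\pm i}+x_{n-3}^{\pm i})$, and likewise for the $y$-component. Hence the displayed recurrence $x_n^{\pm i} = (2x_1^{\pm i}-1)(x_{n-1}^{\pm i}+x_{n-2}^{\pm i}) - x_{n-3}^{\pm i}$ holds precisely when the residual term $(N-1)(x_{n-2}^{\pm i}+x_{n-3}^{\pm i})$ drops out, i.e. when $N=1$; a short induction on $n$, with base cases furnished by the explicit first solutions, then delivers the identities for all $n\geq4$.

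The main obstacle is therefore not the formal manipulation but the determination of the norm $N = (x_1^{\pm i})^2 - d_i^\pm (y_1^{\pm i})^2$, since the stated constants (the coefficient $2x_1^{\pm i}-1$ and the bare trailing term $-x_{n-3}^{\pm i}$) are forced exactly by the value that clears every remaining $N$ from the formula. Pinning this down requires the earlier structural input: the explicit fundamental solutions produced in Theorem \ref{t6}, together with the even period length of the continued fraction expansions from Theorem \ref{t5} read through Lemma \ref{l1}, determine the sign of $N$ unambiguously and thereby identify which of the companion equations $x^2 - d_i^\pm y^2 = \pm 1$ the displayed recurrences actually govern. Once $N$ is settled, the cancellation in the second step and the telescoping in the third are entirely routine, and the theorem follows.
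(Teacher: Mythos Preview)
Your first assertion matches the paper exactly: both extract it directly from equation~\eqref{e8}. For the second assertion the paper offers only the sentence ``The second assertion can be proved by induction on $n$,'' with no indication of how the coefficient $2x_1^{\pm i}-1$ arises; your route through the intermediate second-order recurrence $x_n^{\pm i}=2x_1^{\pm i}x_{n-1}^{\pm i}-Nx_{n-2}^{\pm i}$ is more transparent, since the algebraic identity $(2x_1^{\pm i}-1)(x_{n-1}^{\pm i}+x_{n-2}^{\pm i})-x_{n-3}^{\pm i}=x_n^{\pm i}+(N-1)(x_{n-2}^{\pm i}+x_{n-3}^{\pm i})$ shows precisely why the stated form is equivalent to the two-term relation once $N=1$, after which the induction is a formality. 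You are also right to isolate the determination of $N$ as the real content: the corollary immediately preceding this theorem records that $x^2-d_i^\pm y^2=-1$ is generically insoluble, and the fundamental solutions of Theorem~\ref{t6} have norm $+1$, so the ``$-1$'' in the theorem statement is a slip that your analysis exposes cleanly while the paper's bare appeal to induction conceals it.
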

\begin{proof}
 The first assertion is easily seen from \ref{e8}. The second assertion can
be proved by induction on $n$.
\end{proof}

\begin{thm}\label{t7}
All positive integer solutions of the equation $ x^2-d_i^\pm y^2=1 $  are given by
                           $$ (x_n^{\pm i},y_n^{\pm i})=\big(\frac{1}{2}L_{n}(x_1^{\pm i},-1),y_1^{\pm i} f_{n}(x_1^{\pm i},-1)\big) \ \ \ \ \ n=1,2,3,\ldots$$

\end{thm}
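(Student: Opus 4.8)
The plan is to reduce everything to the power law of Theorem~\ref{t1} and then recognize the resulting expressions through Binet's formula. Write $\varepsilon = x_1^{\pm i} + y_1^{\pm i}\sqrt{d_i^\pm}$ for the fundamental unit produced by Theorem~\ref{t6}, and let $\bar\varepsilon = x_1^{\pm i} - y_1^{\pm i}\sqrt{d_i^\pm}$ be its conjugate. Since $(x_1^{\pm i},y_1^{\pm i})$ solves $x^2-d_i^\pm y^2=1$, we have $\varepsilon\bar\varepsilon = (x_1^{\pm i})^2 - d_i^\pm (y_1^{\pm i})^2 = 1$, so $\bar\varepsilon = \varepsilon^{-1}$. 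By Theorem~\ref{t1} the $n$-th positive solution satisfies $x_n^{\pm i} + y_n^{\pm i}\sqrt{d_i^\pm} = \varepsilon^n$; taking conjugates gives $x_n^{\pm i} - y_n^{\pm i}\sqrt{d_i^\pm} = \varepsilon^{-n}$. Adding and subtracting yields the closed forms
\[
x_n^{\pm i} = \frac{\varepsilon^n + \varepsilon^{-n}}{2}, \qquad y_n^{\pm i} = \frac{\varepsilon^n - \varepsilon^{-n}}{2\sqrt{d_i^\pm}}.
\]

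Next I would identify these with the generalized Lucas and Fibonacci numbers via Binet's formula. The point is that $\varepsilon$ and $\varepsilon^{-1}$ are exactly the roots $\alpha,\beta$ attached to the sequences $L_n(w,z)$ and $f_n(w,z)$: they satisfy $\alpha\beta = \varepsilon\varepsilon^{-1} = 1$ and $\alpha+\beta = \varepsilon + \varepsilon^{-1} = 2x_1^{\pm i}$. Matching against $\alpha\beta = -z$ and $\alpha+\beta = w$ forces $z=-1$ and identifies the first Binet parameter with the trace of the fundamental unit. With this choice Binet's formula gives $L_n = \varepsilon^n + \varepsilon^{-n} = 2x_n^{\pm i}$, i.e. $x_n^{\pm i} = \tfrac12 L_n$. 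For the second coordinate, the Pell relation $(x_1^{\pm i})^2 - 1 = d_i^\pm (y_1^{\pm i})^2$ gives $\varepsilon - \varepsilon^{-1} = 2\sqrt{(x_1^{\pm i})^2-1} = 2 y_1^{\pm i}\sqrt{d_i^\pm}$; hence $f_n = \tfrac{\varepsilon^n-\varepsilon^{-n}}{\varepsilon-\varepsilon^{-1}} = \tfrac{y_n^{\pm i}}{y_1^{\pm i}}$, so $y_n^{\pm i} = y_1^{\pm i} f_n$.

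As an equivalent and perhaps cleaner route, I would verify the claim by induction on $n$. From $\varepsilon + \varepsilon^{-1} = 2x_1^{\pm i}$ one gets the two-term recurrences $x_n^{\pm i} = 2x_1^{\pm i}\,x_{n-1}^{\pm i} - x_{n-2}^{\pm i}$ and $y_n^{\pm i} = 2x_1^{\pm i}\,y_{n-1}^{\pm i} - y_{n-2}^{\pm i}$, which are precisely the recurrences satisfied by $\tfrac12 L_n$ and $y_1^{\pm i} f_n$ for the parameters above. The base cases $n=1,2$ are checked directly (for instance $\tfrac12 L_1 = x_1^{\pm i}$, $y_1^{\pm i} f_1 = y_1^{\pm i}$, and $\tfrac12 L_2 = 2(x_1^{\pm i})^2-1 = x_2^{\pm i}$), so the two families coincide for all $n$.

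The only real obstacle is the correct bookkeeping of the Binet parameters: one must observe that the relevant first argument is the full trace $\varepsilon + \varepsilon^{-1} = 2x_1^{\pm i}$ and that $\varepsilon - \varepsilon^{-1} = 2y_1^{\pm i}\sqrt{d_i^\pm}$, both of which rest on the Pell identity $\varepsilon\bar\varepsilon = 1$. Once these are in place the proof is a direct comparison of closed forms (or of recurrences), and the explicit fundamental solutions furnished by Theorem~\ref{t6} make all the constants completely concrete.
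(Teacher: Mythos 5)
Your proposal is correct and follows essentially the same route as the paper's own proof: invoke Theorem~\ref{t1} (with the fundamental solution supplied by Theorem~\ref{t6}) to get $x_n^{\pm i}+y_n^{\pm i}\sqrt{d_i^\pm}=\varepsilon^n$, use $\varepsilon\bar\varepsilon=1$ to conjugate, and identify $x_n^{\pm i}=\tfrac{1}{2}(\varepsilon^n+\varepsilon^{-n})$ and $y_n^{\pm i}=\tfrac{\varepsilon^n-\varepsilon^{-n}}{2\sqrt{d_i^\pm}}$ with the Lucas and Fibonacci closed forms via Binet (your alternative induction on the recurrence is a harmless redundancy). The one point where you deviate is in fact a correction of the paper's bookkeeping: since $\alpha+\beta=2x_1^{\pm i}$ and $\alpha\beta=1$, the first Binet parameter must be the trace, so the sequences are $L_n(2x_1^{\pm i},-1)$ and $f_n(2x_1^{\pm i},-1)$, whereas the paper computes the same $\alpha,\beta$ (explicitly noting $\alpha_i^{\pm}+\beta_i^{\pm}=2x_1^{\pm i}$) and then writes $L_n(x_1^{\pm i},-1)$, which is inconsistent with its own definition $w=\alpha+\beta$; compare Theorem~\ref{t13}, where the analogous identification for $x^2-d_i^\pm y^2=4$ does use the parameter $2x_1^{\pm i}$.
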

 \begin{proof}
Consider the Pell equation  $ x^2-d_i^\pm y^2=1,$ then by theorem \ref{t6} and theorem \ref{t1} all positive solution of the equation are given by  $ x_{n}^{\pm i} + y_{n}^{\pm i}\sqrt{d_i^\pm}=(x_1^{\pm i}+y_1^{\pm i}\sqrt{d_i^{\pm }})^n$

Let $ \alpha_i^{\pm}=x_1^{\pm i}+y_1^{\pm i}\sqrt{d_i^{\pm }} $ and $\beta_i^{\pm}=x_1^{\pm i}-y_1^{\pm i}\sqrt{d_i^{\pm }}$ . Then $ \alpha_i^{\pm} +\beta_i^{\pm} =2x_1^{\pm i} ,    \alpha_i^{\pm}-\beta_i^{\pm}=2y_1^{\pm i}\sqrt{d_i^\pm}$ and $\alpha_i^{\pm}\beta_i^{\pm} = 1 $ .  Therefore   $ x_{n}^{\pm i}+y_{n}^{\pm i}\sqrt{d_i^\pm} = (\alpha_i^{\pm})^n,                  x_{n}^{\pm i}-y_{n}^{\pm i} \sqrt{d_i^\pm}=(\beta_i^{\pm})^n $.\\
Thus it follows that 			    $x_{n}^{\pm i}=\frac{(\alpha_i^{\pm})^n + (\beta_i^{\pm})^n}{2} =  \frac{1}{2}L_{n}(x_1^{\pm i},-1)$ and
$ y_{n}^{\pm i} = \frac{(\alpha_i^{\pm})^n-(\beta_i^{\pm})^n}{2\sqrt{d_i^\pm}} = \frac{(\alpha_i^{\pm})^n-(\beta_i^{\pm})^n}{2y_1^{\pm i}\sqrt{d_i^\pm} } = y_1^{\pm i} f_{n}  (x_1^{\pm i},-1).$\\
  \end{proof}

\begin{thm}\label{t11}
The fundamental solution of the Pell equation  $ x^2-d_i^\pm y^2=4$ is $(x_{1}^{\pm i}, y_{1}^{\pm i})=(2x_1^{\pm i},2y_1^{\pm i}).$
 \end{thm}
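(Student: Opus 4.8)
The plan is to reduce the claim directly to Theorem \ref{t10}, whose single hypothesis is that the modulus $d$ satisfies $d \not\equiv 0 \pmod 4$. The only genuine input needed is therefore the congruence class of $d_i^\pm$ modulo $4$, and I would show this is forced by the standing assumption on $d_i^\pm$.

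First I would record the congruence observation. By hypothesis $d_i^\pm = a^{2k} b^{2l} \pm i c^m$ is a \emph{square-free} positive integer. Since $4 = 2^2$ is itself a perfect square, a square-free integer can never be divisible by $4$; hence $d_i^\pm \not\equiv 0 \pmod 4$. This is exactly the place where the square-free hypothesis of the paper is used, and it is what separates the present situation from the complementary case handled by Theorem \ref{t9} (where $d \equiv 0 \pmod 4$ and one instead doubles only the first coordinate).

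Next I would invoke Theorem \ref{t6}, which already supplies the fundamental solution $(x_1^{\pm i}, y_1^{\pm i})$ of the auxiliary Pell equation $x^2 - d_i^\pm y^2 = 1$. With $d = d_i^\pm$ now known to satisfy $d \not\equiv 0 \pmod 4$, Theorem \ref{t10} applies verbatim and asserts that the fundamental solution of $x^2 - d_i^\pm y^2 = 4$ is obtained by doubling both coordinates of the fundamental solution of the $=1$ equation, namely $(2x_1^{\pm i}, 2y_1^{\pm i})$. This is precisely the claimed formula.

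The only point worth verifying independently is the consistency, rather than any difficulty, of the statement: one checks immediately that $(2x_1^{\pm i})^2 - d_i^\pm (2y_1^{\pm i})^2 = 4\bigl((x_1^{\pm i})^2 - d_i^\pm (y_1^{\pm i})^2\bigr) = 4$, so the displayed pair is indeed a solution, while its minimality among positive solutions is exactly what Theorem \ref{t10} guarantees. Consequently there is no serious obstacle in this theorem: it is a short corollary of the square-free assumption combined with the already-cited Theorem \ref{t10}, the substantive work having been carried out earlier in determining the fundamental solution of the $=1$ equation in Theorem \ref{t6}.
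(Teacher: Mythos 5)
Your route is genuinely different from the paper's, and it has a gap at its single load-bearing step. You reduce everything to Theorem \ref{t10} after observing that square-freeness forces $d_i^\pm \not\equiv 0 \pmod 4$. That observation is correct, but Theorem \ref{t10} as stated (hypothesis only $d \not\equiv 0 \pmod 4$) is not a safe tool: the ``double both coordinates'' conclusion is valid exactly when every solution of $x^2 - dy^2 = 4$ has both coordinates even, and a computation mod $8$ shows this holds for $d \equiv 2,3 \pmod 4$ but can fail for $d \equiv 5 \pmod 8$, where odd solutions are possible. The failure is realizable inside the paper's hypotheses: take $a=2$, $b=c=1$, $k=l=m=1$, so $h=2$ and $d_1^+ = a^{2k}b^{2l}+c^m = 5$, which is square-free. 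By Theorem \ref{t6} the fundamental solution of $x^2-5y^2=1$ is $(9,4)$, yet the fundamental solution of $x^2-5y^2=4$ is $(3,1)$, not $(2\cdot 9, 2\cdot 4)=(18,8)$. So ``Theorem \ref{t10} applies verbatim'' is precisely where your argument breaks: you needed $d_i^\pm \equiv 2,3 \pmod 4$, not merely $d_i^\pm \not\equiv 0 \pmod 4$, and your closing consistency check only verifies that the doubled pair \emph{is} a solution --- minimality, which is the entire content, is not addressed. (In fact the theorem as printed fails at this boundary case; note the paper itself treats $c=1$ as exceptional for the equations $x^2-d_1^+y^2=-1$ and $x^2-d_1^+y^2=-4$, and the same exception is needed here.)

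The paper's own proof is organized quite differently and, tellingly, does not use square-freeness at all: writing $d_i^\pm = c^{2m}h^2 \pm i c^m$, it splits on the parity of $c$. When $c=2s$ is even it observes $d_2^+ \equiv 0 \pmod 4$ --- a case your reduction declares vacuous, and which is indeed incompatible with the square-free hypothesis --- and passes to the equation $x^2 - (d_2^+/4)y^2 = 1$ via Theorem \ref{t9}, which doubles only the first coordinate; when $c$ is odd it computes $d_2^+ \equiv 2,3 \pmod 4$ according to the parity of $a^{2k}b^{2l}$ and only then invokes the doubling statement (the internal citations to Theorems \ref{t7} and \ref{t6} at that point are evidently typos for \ref{t8}--\ref{t10}). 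Your square-freeness remark would legitimately streamline away the even case, but the finer residue analysis in the odd case --- pinning $d_i^\pm$ to the classes $2,3 \pmod 4$, or else excluding odd solutions mod $8$ when $d_i^\pm \equiv 1 \pmod 8$ --- is exactly the part of the paper's argument that your proposal omits, and without it the reduction to Theorem \ref{t10} is unsound.
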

    \begin{proof}
 We know that $ a^k b^l =c^m h$ so $(a^kb^l )^2 = (c^m h)^2 \Rightarrow d_i^\pm =c^{2m}h^2\pm i c^m.$
               we will give proof for only $d_2^+=a^{2k}b^{2l}+2c^m.$ If $ c=2s $ is even,  then $ d_2^+\equiv 0 (\mbox{mod 4}) $ and
                 $ \frac{d_2^+}{4}=2^{2m-2}s^{2m}h^2 +2^{m-1}s^m.$ Hence by theorem \ref{t5} and \ref{t6}, it follows that the equation $x^2-(2^{2m-2}s^{2m}h^2+2^{m-1}s^m) y^2=1 $ has the fundamental solution $(2^{m}h^2s^m+1,2h).$ Then, by theorem \ref{t9}, the fundamental solution to the equation $x^2-d_2^+ y^2=4 $ is of the form $(2^{m+1}h^2s^m +2,2h\sqrt{d_2^+})$. Since $c=2s$ and $2^mhs^m =a^kb^l,$ so the the equation has the fundamental solution $(2ha^kb^l+2+2h\sqrt{d_2^+})$.

Assume that $c$ is odd. Then $ d_2^+\equiv 2,3 (\mbox{mod 4}) $ if $a^{2k}b^{2l}$ is even or odd respectively. Thus, by theorem \ref{t7} and \ref{t6}, it follows that the fundamental solution of the equation $ x^2-(2^{2m-2}s^{2m}h^2+2^{m-1}s^m) y^2=1 $ is of the form $(2^{m+1}h^2s^m +2,2h).$ Then, by theorem \ref{t6}, the fundamental solution to the equation $x^2-d_2^+ y^2=4$ is of the form $(2ha^kb^l+2+2h\sqrt{d_2^+})$.
Similarly, we can proof for other values of $d_i^\pm$.
 \end{proof}

\begin{thm}\label{t12}
The equation $  x^2-d_i^\pm y^2=-4 $ has no positive integer solution, except $ x^2-d_1^+ y^2=-4$ when $c=1$
\end{thm}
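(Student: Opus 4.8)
The plan is to reduce $x^2 - d_i^\pm y^2 = -4$ to the equation $x^2 - d_i^\pm y^2 = -1$, whose solvability is already settled by the corollary to Theorem \ref{t6}: it has no positive solution except when $d = d_1^+$ and $c = 1$, in which case $(a^k b^l, 1)$ is a solution. First I would record the parity constraints on a hypothetical solution $(x,y)$ of $x^2 - d_i^\pm y^2 = -4$. Reducing modulo $4$ and using that $d_i^\pm$ is squarefree (so $d_i^\pm \not\equiv 0 \pmod 4$), one checks that $x$ and $y$ must have the same parity, that a solution with $x,y$ both even yields at once a solution $(x/2, y/2)$ of $x^2 - d_i^\pm y^2 = -1$, and that a solution with $x,y$ both odd can occur only when $d_i^\pm \equiv 5 \pmod 8$. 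Thus it suffices to rule out even solutions by the corollary and to control the odd ones.

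Next I would compute $d_i^\pm \bmod 4$, exploiting the standing hypothesis $c^m \mid a^k b^l$ (equivalently $h = a^k b^l / c^m \in \mathbb{Z}$), which forces $a^k b^l$ to be even whenever $c$ is even. A short case analysis on the parities of $c$ and $a^k b^l$ then shows that $d_2^\pm \equiv 2$ or $3 \pmod 4$ in every admissible case, and that $d_1^\pm \equiv 2 \pmod 4$ unless $a^k b^l$ is even and $c$ is odd; the residue class $1 \pmod 4$ occurs only for $d_1^\pm$ with $a^k b^l$ even and $c$ odd. For all $d_i^\pm \equiv 2, 3 \pmod 4$ there are no odd solutions by the preceding paragraph, so Theorem \ref{t8} combined with the corollary immediately gives that $x^2 - d_i^\pm y^2 = -4$ is unsolvable.

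The remaining, and genuinely delicate, case is $d_1^\pm \equiv 1 \pmod 4$ (equivalently $\equiv 5 \pmod 8$), where odd solutions are a priori possible; this is the main obstacle. Here an odd solution $(x,y)$ furnishes an element $\eta = \tfrac12\big(x + y\sqrt{d_1^\pm}\big)$ of norm $-1$ lying in the maximal order $\mathcal{O} = \mathbb{Z}\big[\tfrac12(1+\sqrt{d_1^\pm})\big]$. Since $d_1^\pm \equiv 1 \pmod 4$ the rational prime $2$ is unramified, so the unit index $[\mathcal{O}^\times : \mathbb{Z}[\sqrt{d_1^\pm}]^\times]$ equals $1$ or $3$, an odd number; consequently the fundamental unit of $\mathbb{Z}[\sqrt{d_1^\pm}]$ is an odd power of that of $\mathcal{O}$ and hence has the same norm sign. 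Therefore a norm-$(-1)$ unit in $\mathcal{O}$ produces a norm-$(-1)$ unit in $\mathbb{Z}[\sqrt{d_1^\pm}]$, i.e. a solution of $x^2 - d_1^\pm y^2 = -1$, which the corollary forbids unless $d = d_1^+$ with $c = 1$. This closes all the non-exceptional cases.

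Finally, in the exceptional case $d = d_1^+ = a^{2k} b^{2l} + 1$ with $c = 1$, doubling the solution $(a^k b^l, 1)$ of $x^2 - d_1^+ y^2 = -1$ gives $(2 a^k b^l, 2)$, which satisfies $(2a^k b^l)^2 - d_1^+ \cdot 2^2 = -4$, so the equation is solvable exactly as asserted. I expect the arithmetic of the odd case, rather than the modulo-$4$ bookkeeping, to be the crux: one must exclude a norm-$(-1)$ unit in the maximal order that does not already descend to $\mathbb{Z}[\sqrt{d_1^\pm}]$, which is precisely the content of the unit-index computation above, and which can alternatively be read off from the continued fraction expansion of $\tfrac12(1+\sqrt{d_1^\pm})$ rather than that of $\sqrt{d_1^\pm}$ supplied by Theorem \ref{t5}.
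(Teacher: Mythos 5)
Your proposal is correct, and in the decisive case it takes a genuinely different --- and stronger --- route than the paper. The paper's proof splits on the parity of $c$: for even $c$ it performs essentially the same halving descent you describe, reducing $x^2-d_i^\pm y^2=-4$ to the equation $x^2-d_i^\pm y^2=-1$ (though it garbles the algebra, writing $d_1^+$ as $2^{2m-2}s^{2m}h^2+2^{m-1}s^m$ and never dividing $g$ by $2$); for odd $c$ it asserts that $d_1^+\equiv 2,3\pmod 4$ according as $a^{2k}b^{2l}$ is even or odd, and then invokes Theorem \ref{t8}. That congruence claim is in fact backwards: odd $a^{2k}b^{2l}$ gives $d_1^\pm\equiv 2\pmod 4$, while even $a^{2k}b^{2l}$ with odd $c$ gives an \emph{odd} $d_1^\pm$, which can lie in the class $1\pmod 4$ and even $5\pmod 8$ (e.g.\ $a=34$, $b=1$, $c=17$, $k=l=m=1$ gives $d_1^+=1173=3\cdot 17\cdot 23\equiv 5\pmod 8$), where Theorem \ref{t8} does not apply and half-integral solutions are not excluded by descent. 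Your unit-index argument is precisely the ingredient that closes this case, so your proof actually repairs a gap in the paper's own argument. Two small remarks: your parenthetical ``equivalently $\equiv 5\pmod 8$'' is a slip ($1\bmod 8$ also occurs, but is already killed by your parity analysis); and if you prefer to stay at the paper's elementary level, the unit-index computation can be replaced by the identity $\varepsilon^2=x\varepsilon+1$ for $\varepsilon=\tfrac12(x+y\sqrt{d})$ of norm $-1$, whence $\varepsilon^3=\tfrac12 x(x^2+3)+\tfrac12(x^2+1)y\sqrt{d}$ has integer coordinates (as $x$ is odd) and exhibits an integer solution of $u^2-dv^2=-1$ directly.
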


\begin{proof}
  Let $d_1^+=a^{2k}b^{2l}+c^m$ and assume that $c$ is odd. Then $ d_1^+\equiv 2,3 (\mbox{mod 4}) $ if $a^{2k}b^{2l}$ is even or odd respectively. Thus, by theorem \ref{t8} and theorem \ref{t6} the equation $ x^2-d_1^+ y^2=-4 $ has no solution.

Now suppose that $ c $ is even and the positive integer $f$ and $g$ are the solution of the above equation, then $f^2-d_1^+g^2=-4 $. But $d_1^+$ is even and therefore $f$ and $g$ are even. Since $(a^k b^l)^2=(c^m h)^2$ therefore, $ f^2-(2^{2m-2}s^{2m}h^2+2^{m-1}s^m)g^2=-4 $ and implies that
$ (\frac{f}{2})^2-(2^{2m-2}s^{2m}h^2+2^{m-1}s^m)g^2=-1 $ this is impossible by theorem \ref{t6}.
similarly, for the other equations.
\end{proof}
 \begin{cor}
    The equation $ x^2-d_1^+ y^2=-4$ has positive integer solutions  $(2a^{k}b^{l},2)$ if $c=1$.
       \end{cor}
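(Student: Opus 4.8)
The plan is to specialize the parameter $c=1$ and then exhibit the claimed pair explicitly, so that the statement reduces to a single direct verification. When $c=1$ we have $c^m=1$, and hence
$$d_1^+ = a^{2k}b^{2l} + c^m = a^{2k}b^{2l} + 1.$$
First I would simply substitute $(x,y)=(2a^kb^l,2)$ into the form $x^2-d_1^+y^2$ and compute
$$(2a^kb^l)^2 - (a^{2k}b^{2l}+1)\cdot 2^2 = 4a^{2k}b^{2l} - 4a^{2k}b^{2l} - 4 = -4,$$
which establishes that the pair is a solution. Since $a,b$ are positive integers and $k,l\geq 1$, both coordinates $2a^kb^l$ and $2$ are genuine positive integers, so $(2a^kb^l,2)$ is a \emph{positive} integer solution as required.

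A conceptually cleaner route, which I would present alongside the direct check to tie the corollary to the preceding results, is to derive it from the earlier corollary on the equation $x^2-d_i^\pm y^2=-1$. That corollary asserts that, precisely when $c=1$, the pair $(a^kb^l,1)$ solves $x^2-d_1^+y^2=-1$; indeed $(a^kb^l)^2-(a^{2k}b^{2l}+1)\cdot 1 = -1$. Multiplying this identity through by $4$ and absorbing the factor into the variables via the elementary observation that $(u,v)$ solving $x^2-dy^2=-1$ forces $(2u,2v)$ to solve $x^2-dy^2=-4$, namely $(2u)^2-d(2v)^2 = 4(u^2-dv^2)=4(-1)=-4$, immediately yields the pair $(2a^kb^l,2)$. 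This exhibits the corollary as the natural companion to Theorem \ref{t12}, which had already singled out $x^2-d_1^+y^2=-4$ with $c=1$ as the sole solvable case among the equations $x^2-d_i^\pm y^2=-4$.

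There is essentially no genuine obstacle here: the entire content is a one-line substitution, and the only point requiring a word of care is the consistency check against Theorem \ref{t12}. That theorem rules out positive solutions for every $d_i^\pm$ except $d_1^+$ with $c=1$, so I would make sure to emphasize that the present corollary is not in conflict with it but rather supplies the explicit solution in precisely the one exceptional case the theorem leaves open. If desired, one could additionally argue via Lemma \ref{l1} that, because $\sqrt{d_1^+}=[a^kb^l;\overline{2a^kb^l}]$ has odd period length $1$ when $c=1$, the solvability of $x^2-d_1^+y^2=-1$ is forced, and hence so is that of $x^2-d_1^+y^2=-4$; but for the bare existence statement of the corollary the direct verification is the most economical argument.
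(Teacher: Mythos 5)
Your proposal is correct: with $c=1$ the direct substitution $(2a^kb^l)^2-(a^{2k}b^{2l}+1)\cdot 4=-4$ verifies the claim, and the paper itself states this corollary without any proof, evidently treating it as immediate from the preceding corollary that $(a^kb^l,1)$ solves $x^2-d_1^+y^2=-1$ when $c=1$. Your doubling observation that $(u,v)$ solving $x^2-dy^2=-1$ yields $(2u,2v)$ solving $x^2-dy^2=-4$ is precisely the intended (implicit) route, so your argument matches the paper's approach while supplying the verification the paper omits.
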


   \begin{thm}\label{t13}
    All the positive solutions of the equation $ x^2-d_i^\pm y^2=4 $ are given as
$$ (x^{\pm i},y^{\pm i})=\big(L_{n} (2x_1^{\pm i},-1),y_1^{\pm i}f_{n} (2x_1^{\pm i},-1)\big) \ \ \ \  n=1,2,3,\ldots$$
    \end{thm}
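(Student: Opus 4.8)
The plan is to run the same argument used for the $+1$ case in Theorem~\ref{t7}, but now fed by the fundamental solution of the $=4$ equation from Theorem~\ref{t11} and the generation rule of Theorem~\ref{t3}. First I would record that, by Theorem~\ref{t11}, the fundamental solution of $x^2 - d_i^\pm y^2 = 4$ is $(2x_1^{\pm i}, 2y_1^{\pm i})$, where $(x_1^{\pm i}, y_1^{\pm i})$ is the fundamental solution of $x^2 - d_i^\pm y^2 = 1$ produced in Theorem~\ref{t6}. Then Theorem~\ref{t3} says every positive solution satisfies
$$x_n^{\pm i} + y_n^{\pm i}\sqrt{d_i^\pm} = \frac{\bigl(2x_1^{\pm i} + 2y_1^{\pm i}\sqrt{d_i^\pm}\bigr)^n}{2^{\,n-1}}.$$

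Next I would set $\alpha = x_1^{\pm i} + y_1^{\pm i}\sqrt{d_i^\pm}$ and $\beta = x_1^{\pm i} - y_1^{\pm i}\sqrt{d_i^\pm}$, so that $\alpha + \beta = 2x_1^{\pm i}$, $\alpha\beta = (x_1^{\pm i})^2 - d_i^\pm (y_1^{\pm i})^2 = 1$, and $\alpha - \beta = 2y_1^{\pm i}\sqrt{d_i^\pm}$. The key simplification is that the factor $2^n$ generated by $(2\alpha)^n$ cancels all but one power of $2^{\,n-1}$, leaving $x_n^{\pm i} + y_n^{\pm i}\sqrt{d_i^\pm} = 2\alpha^n$; conjugating (replacing $\sqrt{d_i^\pm}$ by $-\sqrt{d_i^\pm}$) gives $x_n^{\pm i} - y_n^{\pm i}\sqrt{d_i^\pm} = 2\beta^n$. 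Adding and subtracting then yields $x_n^{\pm i} = \alpha^n + \beta^n$ and $y_n^{\pm i} = (\alpha^n - \beta^n)/\sqrt{d_i^\pm}$.

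Finally I would match these to the Binet formulas with parameters $(w,z) = (2x_1^{\pm i}, -1)$: since $\alpha + \beta = w$ and $\alpha\beta = 1 = -z$, the numbers $\alpha, \beta$ are precisely the Binet roots for this pair, whence $\alpha^n + \beta^n = L_n(2x_1^{\pm i}, -1)$ and $(\alpha^n - \beta^n)/(\alpha - \beta) = f_n(2x_1^{\pm i}, -1)$. The first identity gives $x_n^{\pm i} = L_n(2x_1^{\pm i}, -1)$ at once, and for the $y$-coordinate I would substitute $\alpha - \beta = 2y_1^{\pm i}\sqrt{d_i^\pm}$ to pass from $f_n$ back to $y_n^{\pm i}$.

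The main obstacle I anticipate is entirely the bookkeeping of the powers of two. The denominator $2^{\,n-1}$ in Theorem~\ref{t3} is exactly what turns the $\frac{1}{2}L_n$ of the $+1$ case into the bare $L_n$ here, and that same cancellation has to be tracked in the $y$-coordinate, where $\alpha - \beta$ carries a further factor of $2y_1^{\pm i}$. This is the one place where the constant multiplying $f_n(2x_1^{\pm i}, -1)$ must be pinned down exactly; everything else is a direct transcription of the proof of Theorem~\ref{t7}. To be safe I would verify the normalization on the cases $n=1,2$ against the fundamental solution $(2x_1^{\pm i}, 2y_1^{\pm i})$ before trusting the general closed form.
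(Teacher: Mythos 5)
Your proposal is correct and follows essentially the same route as the paper's own proof: Theorem \ref{t11} supplies the fundamental solution $(2x_1^{\pm i},2y_1^{\pm i})$, Theorem \ref{t3} generates all solutions, the powers of $2$ cancel to give $x_n^{\pm i}+y_n^{\pm i}\sqrt{d_i^\pm}=2(\alpha_i^{\pm})^n$ with $\alpha_i^{\pm}=x_1^{\pm i}+y_1^{\pm i}\sqrt{d_i^\pm}$, and the Binet formulas with $(w,z)=(2x_1^{\pm i},-1)$ finish the matching, exactly as in the paper. One point worth noting: the normalization check you insist on at the end is not idle caution but actually exposes a slip in the paper itself --- since $\alpha_i^{\pm}-\beta_i^{\pm}=2y_1^{\pm i}\sqrt{d_i^\pm}$, carrying out your substitution honestly yields $y_n^{\pm i}=2y_1^{\pm i}f_n(2x_1^{\pm i},-1)$ rather than the stated $y_1^{\pm i}f_n(2x_1^{\pm i},-1)$ (at $n=1$ the stated formula gives $(2x_1^{\pm i},y_1^{\pm i})$, which satisfies $x^2-d_i^\pm y^2=3(x_1^{\pm i})^2+1\neq 4$ in general, whereas the fundamental solution is $(2x_1^{\pm i},2y_1^{\pm i})$). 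The paper's proof hides this factor of $2$ in an erroneous middle step (and also mistypes $\alpha_i^{\pm}+\beta_i^{\pm}=x_1^{\pm i}$ for $2x_1^{\pm i}$, which you state correctly), so when you pin down the constant as planned, the corrected constant should be propagated into the statement of the theorem as well.
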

\begin{proof}
We know by theorem \ref{t11} that $2x_1^{\pm i}+2y_1^{\pm i}\sqrt{d_i^{\pm }}$ is the fundamental solution of the equation $ x^2-d_i^\pm y^2=4$. Therefore by theorem \ref{t3}, all positive integer solution of the equation $ x^2 - d_i^\pm y^2 = 4 $ are given by $ x_{n}^{\pm i} + y_{n}^{\pm i}\sqrt{{d_i^\pm}}=\frac{1}{2^{n-1}}{\big(2x_1^{\pm i}+2y_1^{\pm i}\sqrt{d_i^{\pm }} \big)^n}=2(\frac{2x_1^{\pm i}+2y_1^{\pm i}\sqrt{d_i^{\pm }}}{2})^n.$ Now, let us consider $ \alpha_i^{\pm}= \frac{2x_1^{\pm i}+2y_1^{\pm i}\sqrt{d_i^{\pm }}}{2} $ and $ \beta_i^{\pm}=\frac{2x_1^{\pm i}-2y_1^{\pm i}\sqrt{d_i^{\pm }}}{2}.$ Then $ \alpha_i^{\pm} + \beta_i^{\pm} =x_1^{\pm i}, \alpha_i^\pm-\beta_i^\pm=2y_1^{\pm i}\sqrt{d_i^{\pm }}$ and $ \alpha_i^\pm\beta_i^\pm=1.$ Thus it is easily seen that $ x_{n} ^{\pm i}+ y_{n}^{\pm i}\sqrt{{d_i^\pm}}=2(\alpha_i^\pm)^n $
     and     $ x_{n}^{\pm i}-y_{n}^{\pm i}\sqrt{{d_i^\pm}}=2(\beta_i^\pm)^n $.
     Therefore
     $ x_{n}^{\pm i}=(\alpha_i^{\pm})^n + (\beta_i^{\pm})^n = L_{n} (2x_1^{\pm i},-1)$ and
     $ y_{n}^{\pm i}=\frac{(\alpha_i^{\pm})^n-(\beta_i^{\pm})^n}{\sqrt{d_i^\pm}}= y_1^{\pm i} \frac{(\alpha_i^\pm)^n-(\beta_i^\pm)^n}{2h \sqrt{d_i^\pm}}=y_1^{\pm i}f_{n} (2x_1^{\pm i},-1). $
   \end{proof}

   Thus we can give the following corollaries.

    \begin{cor}
     If $d_1^+=a^{2k}+a^m,$  then
			$ \sqrt{d_1^+}=[a^k,\overline{2a^{k-m},2a^k}] $
    and if $d_2^+=a^{2k}+2a^m,$  then
			$ \sqrt{d_2^+}=[a^k,\overline{a^{k-m},2a^k}] $
\end{cor}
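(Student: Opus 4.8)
The plan is to obtain both expansions as direct specializations of Theorem \ref{t5}, since the radicands $d_1^+ = a^{2k} + a^m$ and $d_2^+ = a^{2k} + 2a^m$ arise from the general family $d_i^+ = a^{2k}b^{2l} + i c^m$ by setting $b = 1$ and $c = a$. First I would record these substitutions and check that the standing hypotheses of Theorem \ref{t5} remain in force: with $b = 1$ and $c = a$ the divisibility requirement $c \mid ab$ holds trivially, and the auxiliary parameter becomes $h = \frac{a^k b^l}{c^m} = a^{k-m}$, which is a genuine positive integer precisely because $k \geq m$ in the paper's conventions. I would also note that $a^k b^l = a^k$ under this specialization.

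With these identifications in hand the computation is immediate and amounts to substitution. For $d_1^+$, part (ii) of Theorem \ref{t5} furnishes $\sqrt{d_1^+} = [a^k b^l; \overline{2h, 2a^k b^l}]$; replacing $a^k b^l$ by $a^k$ and $h$ by $a^{k-m}$ yields $[a^k; \overline{2a^{k-m}, 2a^k}]$, exactly as asserted. Likewise, for $d_2^+$, part (iv) gives $\sqrt{d_2^+} = [a^k b^l; \overline{h, 2a^k b^l}]$, and the same replacement produces $[a^k; \overline{a^{k-m}, 2a^k}]$.

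Because the argument reduces entirely to evaluating an already-established expansion at specific parameter values, I do not expect a genuine obstacle here. The only point that warrants care is the bookkeeping of $h$: one must confirm that $a^{k-m}$ is the correct value and that it remains a positive integer, so that the partial quotients are legitimate continued-fraction entries (in particular $2a^{k-m} \geq 2$ and $a^{k-m} \geq 1$). Once this is verified, both claimed expansions follow verbatim from Theorem \ref{t5}.
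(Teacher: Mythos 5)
Your proposal is correct and matches the paper's intent exactly: the paper states this corollary without proof as an immediate specialization of Theorem \ref{t5}, and your substitution $b=1$, $c=a$, giving $h=a^{k-m}$, is precisely that specialization (as the paper's later remark on recovering \cite{P} via $c=a$, $b=1$ confirms). Your extra verification that $c\mid ab$ holds and that $h=a^{k-m}$ is a positive integer since $k\geq m$ is the only bookkeeping needed, and you did it correctly.
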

    \begin{cor}
         If $ d=a^{2k}+2a^m $, then the fundamental solution to the equation $ x^2-d_2^+y^2=1 $ is
     $ x_{1}+y_{1}\sqrt{d_2^+}=a^{2k-m}+a^{k-m}\sqrt{d} $
     and the equation $ x^2-d_2^+y^2=-1 $ has no solutions.
     \end{cor}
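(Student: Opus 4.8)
The plan is to recognize this corollary as the specialization of Theorem \ref{t6}(ii), together with the corollary following it, to the parameter choice $b = 1$ and $c = a$ (keeping the same exponent $m$), so that the general quantity $d_2^+ = a^{2k}b^{2l} + 2c^m$ becomes exactly $a^{2k} + 2a^m = d$. Under this substitution the auxiliary integer $h = \frac{a^k b^l}{c^m}$ collapses to $h = \frac{a^k}{a^m} = a^{k-m}$, which is a genuine positive integer precisely because $k \geq m$; verifying this integrality is the only hypothesis I need to check before invoking the earlier machinery.

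First I would feed $a^k b^l = a^k$ and $h = a^{k-m}$ into the fundamental-solution formula recorded in Theorem \ref{t6}(ii), namely $(x_1^{+2}, y_1^{+2}) = (h a^k b^l + 1,\, h)$. This gives $x_1 = a^{k-m}\cdot a^k + 1 = a^{2k-m} + 1$ and $y_1 = a^{k-m}$, so that the fundamental solution is $x_1 + y_1\sqrt{d} = (a^{2k-m}+1) + a^{k-m}\sqrt{d}$. As a sanity check I would confirm directly that $(a^{2k-m}+1)^2 - (a^{2k}+2a^m)(a^{k-m})^2 = 1$, the cross term $2a^{2k-m}$ cancelling against the $-2a^{2k-m}$ produced by the $2a^m$ summand. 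I note that the displayed expression in the statement seems to be missing the additive constant: the correct fundamental solution carries the $+1$, as both Theorem \ref{t6} and this verification confirm.

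For the nonexistence of solutions to $x^2 - d y^2 = -1$, I would argue from the shape of the continued fraction. By Theorem \ref{t5}(iv) — equivalently the immediately preceding corollary, which records $\sqrt{d_2^+} = [a^k;\ \overline{a^{k-m}, 2a^k}]$ — the period of $\sqrt{d_2^+}$ has even length $l = 2$. Lemma \ref{l1} then states that when the period length is even the equation $x^2 - d y^2 = -1$ has no solution whatsoever. Alternatively, this is exactly the content of the corollary following Theorem \ref{t6}, whose single exception is reserved for $d_1^+$ with $c = 1$ and therefore does not touch the $d_2^+$ case at hand. Either citation closes the second assertion.

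The computations here are entirely routine, so there is no real analytic obstacle; the only genuine point of care is the bookkeeping. Specifically, I must match the specialized parameters $(b,c) = (1,a)$ to the general $d_2^+$, track the collapse of $h$ to $a^{k-m}$, and flag the absent $+1$ so that the asserted fundamental solution is the one that actually satisfies $x^2 - dy^2 = 1$.
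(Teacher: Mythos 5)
Your proposal is correct and takes essentially the same route the paper intends: the corollary is a direct specialization of Theorem \ref{t6}(ii) with $b=1$, $c=a$ (so $h=a^{k-m}$, integral since $k\geq m$), combined with the even period length $2$ from Theorem \ref{t5}(iv) and Lemma \ref{l1} for the nonsolvability of $x^2-dy^2=-1$, and the paper supplies no separate argument beyond this. You are also right to flag the typographical omission: the fundamental solution should read $x_1+y_1\sqrt{d}=(a^{2k-m}+1)+a^{k-m}\sqrt{d}$, as your direct verification and the parallel corollary for $d_1^+$ (which does carry the $+1$) both confirm.
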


   \begin{cor}
     Let $ d_1^+=a^{2k}+a^m $ . Then the fundamental solution to the equation $ x^2-d_1^+y^2=1 $ is
     $ x_{1}+y_{1}\sqrt{d_1^+}=2a^{2k-m}+1+2a^{k-m}\sqrt{d} $
     and the equation $ x^2-d_1^+y^2=-1 $ has no solutions
     \end{cor}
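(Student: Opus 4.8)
The plan is to recognize the stated quantity $d_1^+ = a^{2k} + a^m$ as the specialization of the general family $d_i^\pm = a^{2k}b^{2l} \pm i c^m$ obtained by taking $b = 1$, $c = a$, $i = 1$, and the $+$ sign, and then to read off both assertions directly from Theorem \ref{t6} together with the subsequent corollary governing the equation $x^2 - d_i^\pm y^2 = -1$. The result is genuinely a corollary of the general theory, so the work is almost entirely bookkeeping of the parameter substitution rather than new analysis.

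First I would verify that this choice of parameters satisfies the standing hypotheses of the section. With $b = 1$ the divisibility condition $c \mid ab$ becomes $a \mid a$, which holds for $c = a$; and the quantity $h = \frac{a^k b^l}{c^m}$ collapses to $h = \frac{a^k}{a^m} = a^{k-m}$, which is a genuine positive integer precisely because $k \geq m$. Hence the entire apparatus of Theorem \ref{t6} is available, now specialized by $a^k b^l = a^k$ and $h = a^{k-m}$.

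Next, for $x^2 - d_1^+ y^2 = 1$, I would substitute these values into the fundamental solution $(x_1^{+1}, y_1^{+1}) = (2h\,a^k b^l + 1,\ 2h)$ furnished by Theorem \ref{t6}(i). This gives $x_1 = 2 a^{k-m} a^k + 1 = 2 a^{2k-m} + 1$ and $y_1 = 2 a^{k-m}$, whence $x_1 + y_1 \sqrt{d_1^+} = 2a^{2k-m} + 1 + 2a^{k-m}\sqrt{d_1^+}$, which is exactly the claimed expression. (As a cross-check, the continued fraction $\sqrt{d_1^+} = [a^k;\ \overline{2a^{k-m}, 2a^k}]$ recorded in the preceding corollary is the source of this same fundamental solution via Lemma \ref{l1}.)

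Finally, for the nonexistence of solutions to $x^2 - d_1^+ y^2 = -1$, I would invoke the corollary following Theorem \ref{t6}, which asserts that the negative Pell equation for $d_1^+$ is solvable only in the exceptional case $c = 1$; equivalently, since the period of $\sqrt{d_1^+}$ is the even number $2$, Lemma \ref{l1} forbids any solution. Here $c = a$, so the only delicate point is confirming that this case really lies outside the $c = 1$ exception, i.e.\ that $a \geq 2$ is in force; under that standing assumption $c = a \neq 1$ and no solution exists. There is no analytic obstacle to overcome, so the entire argument reduces to checking that the specialized parameters meet the hypotheses and tracking the substitution correctly.
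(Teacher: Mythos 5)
Your proposal is correct and matches the paper's intended argument: the paper offers no separate proof for this corollary precisely because it follows, as you show, by specializing Theorem \ref{t6}(i) with $b=1$, $c=a$, $h=a^{k-m}$ (using $k\geq m$) and then ruling out the negative equation via the even period $2$ and Lemma \ref{l1}. Your explicit flagging of the $c=1$ (i.e.\ $a=1$, $d_1^+=2$) exception, which the paper leaves implicit, is a careful touch but does not change the route.
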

\begin{rk}
\begin{itemize}
  \item If $c=b$ and $k=l=m=1$, then the main results of \cite{G} become the corollaries of our main results.
  \item If $c=a$ and $k=l=m=1$ and $b=1$, then the main results of \cite{P} become the corollaries of our main results.
\end{itemize}
\end{rk}
\section{Main Results 2}
Let us consider the matrix $ M_{\pm i}$ associated with $d_i^\pm$ and corresponding fundamental solution $x_i^\pm \ ,y_i^\pm $ as
\begin{equation}\label{e9}
 M_{\pm i} =\left(
            \begin{array}{cc}
             x_1^{\pm i} & y_1^{\pm i}d_i^\pm \\
              y_1^{\pm i} & x_1^{\pm i}\\
           \end{array}
      \right)
\end{equation}

In the following theorem, we able to determine the $ n^{th} $  power of $ M_{\pm i}$ which we use
it later. (Here, we note that ${n\choose  2j} = {n-2\choose 2j} +{n-2\choose  2j-2}+2{n-2\choose 2j-1}$
for $ j = 1, 2,\ldots ,\frac{n - 2}{2}).$
\begin{thm}\label{t2}
If $n\geq 0$, then the $ n^{th} $  power of $ M_{\pm i}$ is given by
$$ M_{\pm i}^n =\left(
            \begin{array}{cc}
              M_{11}^n &  M_{12}^n \\
              M_{21}^n &  M_{22}^n\\
            \end{array}
          \right) \ \ \ \ \ \ \ \mbox{where}$$
  \begin{description}
  \item[a] If $n$ is even
  \begin{eqnarray*}
   M^n_{11} &=& \sum\limits^{\frac{n}{2}}_{j=0}{n\choose 2j}(x_1^{\pm i})^{n-2j}(y_1^{\pm i})^{2j}(d_i^\pm)^j = M^n_{22} \\
   M^n_{12} &=& \sum\limits^{\frac{n-2}{2}}_{j=0}{n\choose 2j+1}(x_1^{\pm i})^{n-1-2j}(y_1^{\pm i})^{2j+1} (d_i^\pm)^{j+1} \\
   M^n_{21} &=& \sum\limits^{\frac{n-2}{2}}_{j=0}{n\choose 2j+1}(x_1^{\pm i})^{n-1-2j}(y_1^{\pm i})^{2j+1} (d_i^\pm)^j
   \end{eqnarray*}
   \item[b] If $n$ is odd
   \begin{eqnarray*}
   M^n_{11} &=& \sum\limits^{\frac{n-1}{2}}_{j=0}{n\choose 2j}(x_1^{\pm i})^{n-2j}(y_1^{\pm i})^{2j}(d_i^\pm)^j = M^n_{22} \\
   M^n_{12} &=& \sum\limits^{\frac{n-1}{2}}_{j=0}{n\choose 2j+1}(x_1^{\pm i})^{n-1-2j}(y_1^{\pm i})^{2i+1} (d_i^\pm)^{j+1}\\
   M^n_{21} &=& \sum\limits^{\frac{n-}{2}}_{j=0}{n\choose 2j+1}(x_1^{\pm i})^{n-1-2j}(y_1^{\pm i})^{2i+1}(d_i^\pm)^j
   \end{eqnarray*}
   \end{description}
\end{thm}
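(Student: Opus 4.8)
The plan is to prove both the even and odd cases simultaneously by induction on $n$, using the matrix recursion $M_{\pm i}^{n} = M_{\pm i}^{n-1}\,M_{\pm i}$. First I would dispose of the base cases: for $n=0$ the matrix is the identity, and the claimed sums reduce to the single term $j=0$ in the diagonal entries (giving $1$) with empty off-diagonal sums (giving $0$); for $n=1$ one checks directly that the sums collapse to the entries $x_1^{\pm i}$, $y_1^{\pm i}d_i^\pm$, $y_1^{\pm i}$ of $M_{\pm i}$ itself. These anchor the induction and also let me verify that the upper index of summation $\tfrac{n-1}{2}$ versus $\tfrac{n-2}{2}$ is being bookkept correctly in each parity.

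The core inductive step is the multiplication $M_{\pm i}^{n} = M_{\pm i}^{n-1}M_{\pm i}$. Writing out the four entries, each new entry is a linear combination of the entries of $M_{\pm i}^{n-1}$ with coefficients $x_1^{\pm i}$, $y_1^{\pm i}$, $y_1^{\pm i}d_i^\pm$. For instance $M_{11}^{n} = M_{11}^{n-1}x_1^{\pm i} + M_{12}^{n-1}y_1^{\pm i}$, and substituting the inductive formulas yields a sum of two binomial-coefficient sums that must be reindexed and merged into the claimed single sum for $M_{11}^{n}$. Here the parity of $n$ changes relative to $n-1$, so the key is to track how the summation ranges shift and to fold the two contributing sums together. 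The entire argument then reduces to the combinatorial identity highlighted in the statement, namely
\begin{equation*}
{n\choose 2j} = {n-2\choose 2j} + {n-2\choose 2j-2} + 2{n-2\choose 2j-1},
\end{equation*}
together with the simpler Pascal relations ${n\choose 2j+1}={n-1\choose 2j}+{n-1\choose 2j+1}$ that govern the off-diagonal entries; these are exactly what let the shifted binomial sums recombine into the target sum.

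The main obstacle I anticipate is purely the index-juggling: after substituting the two inductive expressions, the powers of $x_1^{\pm i}$, $y_1^{\pm i}$, and $d_i^\pm$ carried by a term with summation index $j$ in one sum must be aligned with those carried by a term of index $j$ or $j-1$ in the other sum before the binomial identities can be applied, and the extremal terms ($j=0$ and the top of the range) have to be checked separately since they may not be covered uniformly by the identity. I would organize the bookkeeping by fixing the monomial $(x_1^{\pm i})^{n-2j}(y_1^{\pm i})^{2j}(d_i^\pm)^{j}$ and collecting all contributions to its coefficient, which reduces the verification to confirming the stated three-term identity for the diagonal entries and the two-term Pascal rules for the off-diagonal entries. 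Once the coefficient match is established for a generic interior $j$ and the boundary indices are handled, the induction closes and both parity cases follow at once.
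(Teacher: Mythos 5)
Your proposal is correct, but it follows a genuinely different inductive scheme from the paper's. The paper runs a step-two induction: it verifies the base case $n=2$ by direct computation of $M_{\pm i}^2$ (carried out only for $d_2^+$, with the remaining $d_i^\pm$ dismissed as ``similar''), assumes the formulas for $n-2$, and expands $M_{\pm i}^{n}=M_{\pm i}^{n-2}M_{\pm i}^{2}$; since multiplication by $M_{\pm i}^2$ preserves the parity of the exponent, each parity case of the statement feeds directly into itself, and the coefficient recombination is precisely the three-term identity ${n\choose 2j}={n-2\choose 2j}+{n-2\choose 2j-2}+2{n-2\choose 2j-1}$ that the paper records immediately before the theorem. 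You instead use the step-one recursion $M_{\pm i}^{n}=M_{\pm i}^{n-1}M_{\pm i}$ anchored at $n=0$ and $n=1$, so parity alternates between consecutive steps and the substituted sums recombine by ordinary Pascal relations. Your route buys lighter identities, genuinely trivial base cases, and an explicit check of the boundary indices that the paper glosses over (the paper's write-up proves only the $M_{11}^n$ entry in detail, with the off-diagonal entries asserted ``similarly''); the paper's route buys parity stability, so each stated formula is reproduced verbatim in the inductive step, at the cost of the heavier identity and of computing $M^2$ first. One internal slip you should clean up: having committed to the step-one recursion, the three-term identity you quote is never actually needed in your argument --- for the diagonal entries the match is ${n-1\choose 2j}+{n-1\choose 2j-1}={n\choose 2j}$, plain Pascal, and the three-term identity is simply two Pascal steps composed, i.e.\ it belongs to the paper's $n-2\to n$ scheme rather than to yours; invoking it alongside the Pascal rules, as you do, conflates the two schemes even though it does not invalidate the proof.
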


\begin{proof}(a):
Here we will give the proof for $d_2^+$ by mathematical induction on $n$. \\If $n = 2$, then
$\\ M_{\pm i}^2 =\left(
            \begin{array}{cc}
       h^2a^{2k}b^{2l}+1+2ha^kb^l+h^2d_2^+ & 2hd_2^+(ha^kb^l+1)      \\
                        2hd_2^+(ha^kb^l+1) &  h^2a^{2k}b^{2l}+1+2ha^kb^l+h^2d_2^+  \\
            \end{array}
          \right)
 ,  \ \ where\\$
$ M^2_{11} = \sum\limits^{1}_{j=0}{2\choose 2j}(ha^kb^l+1)^{2-2j}h^{2j}{d_2^+}^j = (ha^kb^l+1)+h^2d_2^+ = M^2_{22}\\
M^2_{12} = \sum\limits^{0}_{j=0}{2\choose 2j+1}(ha^kb^l+1)^{1-2j}h^{2j+1}{ d_2^+}^{j+1}=2hd(ha^kb^l+1)\\
M^2_{21} = \sum\limits^{0}_{j=0}{2\choose 2j+1}(ha^kb^l+1)^{1-2j}h^{2j+1} {d_2^+}^j=2h(ha^kb^l+1)\\$
So it is true for $n = 2$. Let us assume that it is true for $n - 2$, that is,
$M_{\pm i}^{n-2} =\left(
            \begin{array}{cc}
              M^{n-2}_{11} & M^{n-2}_{12} \\
              M^{n-2}_{21} & M^{n-2}_{22} \\
            \end{array}
          \right)$,
where
$ M^{n-2}_{11} = \sum\limits^{\frac{n-2}{2}}_{j=0}{n-2\choose 2j}(ha^kb^l+1)^{n-2-2j}h^{2j}(d_2^+)^j= M^{n-2}_{22}\\
M^{n-2}_{12} = \sum\limits^{\frac{n-4}{2}}_{j=0}{n-2\choose 2j+1}(ha^kb^l+1)^{n-3-2j}h^{2j+1}(d_2^+)^{j+1},
M^{n-2}_{21} = \sum\limits^{\frac{n-4}{2}}_{j=0}{n-2\choose 2j+1}(ha^kb^l+1)^{n-3-2j}h^{2j+1}(d_2^+)^j\\$
We will prove it for $n$, since $M_{\pm i}^n= M_{\pm i}^{n-2}M_{\pm i}^2,$  we get\\
 $  M^{n-2}_{11} [(ha^kb^l+1) + h^2(d_2^+)] +M^{n-2}_{12}[2h(ha^kb^l+1)] =[ (ha^kb^l+1)^{n-2}+{n-2\choose 2}(ha^kb^l+1)^{n-4}h^2(d_2^+) + \cdots +{n-2\choose n-4}(ha^kb^l+1)^2h^{n-4}(d_2^+)^\frac{n-4}{2} + \cdots+h^{n-2}(d_2^+)^\frac{n-2}{2}\cdots][(ha^kb^l+1) + h^2(d_2^+)]+
[{n-2\choose 1}(ha^kb^l+1)^{n-3}h(d_2^+) + \cdots +{n-2\choose 3}(ha^kb^l+1)^{n-5}h^3(d_2^+)^2 + \cdots +{n-2\choose n-5}(ha^kb^l+1)^3h^{n-5}(d_2^+)^\frac{n-4}{2}+ \cdots +{n-2\choose n-3}(ha^kb^l+1)h^{n-3}(d_2^+)^\frac{n-2}{2} ] [2h(ha^kb^l+1)]\\
   =[(ha^kb^l+1)^{n-2}+{n-2\choose 2}(ha^kb^l+1)^{n-4}h^2(d_2^+) + \cdots +{n-2\choose n-4}(ha^kb^l+1)^2h^{n-4}(d_2^+)^\frac{n-4}{2} + \cdots+h^{n-2}(d_2^+)^\frac{n-2}{2}\cdots][(ha^kb^l+1) + h^2(d_2^+)]+
[{n-2\choose 1}(ha^kb^l+1)^{n-3}h(d_2^+) + \cdots +{n-2\choose 3}(ha^kb^l+1)^{n-5}h^3(d_2^+)^2 + \cdots +{n-2\choose n-5}(ha^kb^l+1)^3h^{n-5}(d_2^+)^\frac{n-4}{2}+ \cdots +{n-2\choose n-3}(ha^kb^l+1)h^{n-3}(d_2^+)^\frac{n-2}{2} ] [2h(ha^kb^l+1)]\\
   = (2ha^kb^l-1)^n + {n - 2\choose  2} + 1 + 2{n - 2\choose  1} ](2ha^kb^l-1)^{n-2}4h^2(d_2^+)
+{n - 2\choose 4} + {n - 2\choose 2} + 2{n - 2\choose 3}(ha^kb^l+1)^{n-4}h^4(d_2^+)^2+\cdots+
(ha^kb^l+1)^4h^{n-4}(d_2^+)^{\frac{n-4}{2}}\\+[1 + {n - 2\choose  n - 4} + 2{n - 2\choose  n - 3} (ha^kb^l+1)^2h^{n-2}(d_2^+)^{\frac{n-2}{2}}
+h^n(d_2^+){\frac{n}{2}}\\
   = (ha^kb^l+1)^n + {n\choose 2}(ha^kb^l+1)^{n-2}h^2(d_2^+) + {n\choose 4}(ha^kb^l+1)^{n-4}h^4(d_2^+)^2
+\cdots+ {n\choose n - 2}(ha^kb^l+1)^2h^{n-2}(d_2^+)^{\frac{n-2}{2}}+ (h)^n(d_2^+)^{\frac{n}{2}}
=\sum\limits^{\frac{n}{2}}_{j=0}{n\choose  2j}(ha^kb^l+1)^{n-2j}h^{2j}(d_2^+)^j=M^n_{12}$

 Similarly, it can be shown that for $M_{12}^n,M_{21}^n$, thus
$M_{\pm i}^n =\left(
            \begin{array}{cc}
              M^n_{11} & M^n_{12} \\
              M^n_{21} & M^n_{22}
            \end{array}
          \right)$
as claimed and the other cases for $d_i^\pm$ can be proved similarly.
\end{proof}

In the following theorem, we will show that the $ n^{th} $ integer solution
$ (x_{n}^{ \pm i}, y_{n}^{\pm i}) $ of $ F_{\Delta_{d_i^\pm}} (x, y) = 1 $ can be deduce via $d_i^\pm.$
\begin{thm}
 The $ n^{th} $ integer solution of $ F\Delta_{d_i^\pm} (x, y) = 1 $ is $(x_{n}^{\pm i}, y_{n}^{\pm i}),$ where\\
$x_{n}^{\pm i} = \left\{
          \begin{array}{ll}
           \sum\limits^{\frac{n}{2}}_{j=0} {n\choose2j}(x_1^{\pm i})^{n-2j}(y_1^{\pm i})^{2j}(d_i^\pm)^j, & \hbox{if n is even;} \\
    \sum\limits^{\frac{n-1}{2}}_{j=0}{n\choose 2j}(x_1^{\pm i})^{n-2j}(y_1^{\pm i})^{2j}(d_i^\pm)^j, & \hbox{if n is odd.} \\
          \end{array}
        \right.\\$
$y_{n}^{\pm i} = \left\{
          \begin{array}{ll}
           \sum\limits^{\frac{n-2}{2}}_{j=0} {n\choose 2j+1}(x_1^{\pm i})^{n-1-2j}(y_1^{\pm i})^{2j+1}(d_i^\pm)^j, & \hbox{if n is even;} \\
    \sum\limits^{\frac{n-2}{2}}_{j=0}{n\choose 2j}(x_1^{\pm i})^{n-1-2j}(y_1^{\pm i})^{2j+1}(d_i^\pm)^j, & \hbox{if n is odd.} \\
          \end{array}
        \right.$
  \end{thm}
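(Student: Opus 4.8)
The plan is to reduce the statement to the explicit power formula for $M_{\pm i}$ that was just established. Since the discriminant is $\Delta_{d_i^\pm}=4d_i^\pm$, the Pell form is $F_{\Delta_{d_i^\pm}}(x,y)=x^2-d_i^\pm y^2$, so solving $F_{\Delta_{d_i^\pm}}(x,y)=1$ is exactly solving the Pell equation $x^2-d_i^\pm y^2=1$. By Theorem \ref{t1} its positive solutions satisfy $x_n^{\pm i}+y_n^{\pm i}\sqrt{d_i^\pm}=(x_1^{\pm i}+y_1^{\pm i}\sqrt{d_i^\pm})^n$. Thus the whole problem is to express the components $x_n^{\pm i},y_n^{\pm i}$ in closed form.

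First I would observe that $M_{\pm i}$ is precisely the matrix of multiplication by the fundamental solution $\alpha=x_1^{\pm i}+y_1^{\pm i}\sqrt{d_i^\pm}$ on $\mathbb{Z}[\sqrt{d_i^\pm}]$ in the ordered basis $\{1,\sqrt{d_i^\pm}\}$. Indeed, $\alpha\,(u+v\sqrt{d_i^\pm})=(x_1^{\pm i}u+y_1^{\pm i}d_i^\pm v)+(y_1^{\pm i}u+x_1^{\pm i}v)\sqrt{d_i^\pm}$, which is exactly the action of $M_{\pm i}$ on the column $(u,v)^{T}$. Consequently $M_{\pm i}^{\,n}$ is the matrix of multiplication by $\alpha^n$, and since $\alpha^n=x_n^{\pm i}+y_n^{\pm i}\sqrt{d_i^\pm}$, applying it to $(1,0)^{T}$ gives
$$M_{\pm i}^{\,n}\begin{pmatrix}1\\0\end{pmatrix}=\begin{pmatrix}x_n^{\pm i}\\ y_n^{\pm i}\end{pmatrix},$$
that is, $M^n_{11}=x_n^{\pm i}$ and $M^n_{21}=y_n^{\pm i}$. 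Alternatively, this identity can be proved by induction on $n$ from $M_{\pm i}^{\,n}=M_{\pm i}M_{\pm i}^{\,n-1}$ together with the linear recurrence $x_n^{\pm i}=x_1^{\pm i}x_{n-1}^{\pm i}+y_1^{\pm i}d_i^\pm y_{n-1}^{\pm i}$ and $y_n^{\pm i}=y_1^{\pm i}x_{n-1}^{\pm i}+x_1^{\pm i}y_{n-1}^{\pm i}$ recorded earlier; both arguments amount to recognizing that the first column of $M_{\pm i}^{\,n}$ carries the $n$-th solution.

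With this identification in hand the conclusion is immediate: the closed-form expressions for $M^n_{11}$ and $M^n_{21}$ already computed (split according to the parity of $n$) transcribe directly into the stated binomial sums. The diagonal entry $x_n^{\pm i}=M^n_{11}$ yields the $\binom{n}{2j}(x_1^{\pm i})^{n-2j}(y_1^{\pm i})^{2j}(d_i^\pm)^j$ sums, while the subdiagonal entry $y_n^{\pm i}=M^n_{21}$ yields the $\binom{n}{2j+1}(x_1^{\pm i})^{n-1-2j}(y_1^{\pm i})^{2j+1}(d_i^\pm)^j$ sums, with the upper summation limit $\tfrac{n}{2}$ or $\tfrac{n-1}{2}$ dictated by the parity of $n$.

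I expect no genuine obstacle here: once the matrix identification $M^n_{11}=x_n^{\pm i}$, $M^n_{21}=y_n^{\pm i}$ is in place, the theorem is a straight reading-off of the previously proven power formula. The only point requiring care is bookkeeping — making the parity-dependent summation ranges for $x_n^{\pm i}$ and $y_n^{\pm i}$ agree exactly with those appearing in the preceding computation of $M_{\pm i}^{\,n}$.
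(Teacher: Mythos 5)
Your proposal is correct and takes essentially the same approach as the paper: both reduce the theorem to the previously proved closed form for the entries of $M_{\pm i}^n$ and read the binomial sums off from its first column, with $x_n^{\pm i}=M^n_{11}$ and $y_n^{\pm i}=M^n_{21}$. If anything, your identification of $M_{\pm i}$ as the matrix of multiplication by $x_1^{\pm i}+y_1^{\pm i}\sqrt{d_i^\pm}$, so that $M_{\pm i}^n(1,0)^T=(x_n^{\pm i},y_n^{\pm i})^T$, is tidier than the paper's own displayed computation, which multiplies $M_{\pm i}^n$ by the vector $(ha^kb^l,h)^T$ and then appeals to an unspecified ``simplification.''
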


\begin{proof}
 From \ref{s1}, we can write the $n^{th}$ solution for $d_2^+=a^{2k}b^{2l}+2c^m$\\
  $\left(
                              \begin{array}{c}
                                x_n^{+2} \\
                                y_n^{+2} \\
                              \end{array}
                            \right)
                            = \left(
   \begin{array}{cc}
     ha^kb^l+1 & hd_1 \\
     h & ha^kb^l+1 \\
   \end{array}
 \right)^n
 \left(
            \begin{array}{c}
             ha^kb^l \\
            h \\
            \end{array}
          \right)=\left(
                    \begin{array}{c}
                      M_{11}^n+M_{12}^n \\
                       M_{21}^n+M_{22}^n \\
                    \end{array}
                  \right) $
after simplification, we get the required solution.
\end{proof}
Now we can consider the Pell form $ F_{\Delta_{d_i^\pm}}$  and note that this form is not reduced
since $ |\sqrt{4d} -1| > 0.$ So we can give the following
theorem related to reduction of $ F_{\Delta_{d_i^\pm}}:$
\begin{thm}
\begin{description}
\item[i] The reduction of $ F_{\Delta_{d_1^+}}$  is \\$\rho^2(F_{\Delta_{d_1^+}}) =  (1,2hc^m,-c^m) . $
\item[ii] The reduction of $ F_{\Delta_{d_2^+}}$  is \\$\rho^2(F_{\Delta_{d_2^+}}) =  (1,2hc^m,-2c^m) . $
\item[iii] The reduction of $ F_{\Delta_{d_2^-}}$  is \\$\rho^2(F_{\Delta_{d_1^-}}) = (1,2hc^m-2,1-(2h-1)c^m) . $
\item[iv] The reduction of $ F_{\Delta_{d_1^-}}$  is \\$\rho^2(F_{\Delta_{d_2^-}}) =  (1,2hc^m-2,1-2(h-1)c^m) . $
\end{description}
\end{thm}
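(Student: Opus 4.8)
The plan is to apply the reduction operator $\rho$ of (\ref{e4})--(\ref{e5}) twice to the Pell form $F_{\Delta_{d_i^\pm}}=(1,0,-d_i^\pm)$, whose discriminant is $\Delta=4d_i^\pm$ so that $\sqrt{\Delta}=2\sqrt{d_i^\pm}$. Since the statement is a finite explicit calculation, I would organize it as two passes through the recursion $(a_{j+1},b_{j+1},c_{j+1})=(c_j,\,-b_j+2c_jr_j,\,c_jr_j^2-b_jr_j+a_j)$, reading $r_j$ off from (\ref{e5}) at each pass. The whole proof reduces to (i) deciding which branch of (\ref{e5}) applies at each step and (ii) evaluating one floor.

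First I would run a single step from $(a_0,b_0,c_0)=(1,0,-d_i^\pm)$, so $|c_0|=d_i^\pm$. Under the standing hypotheses ($ab\ge 2$, resp. $ab\ge 3$, together with $k,l\ge m$) one has $d_i^\pm\ge 4$, whence $|c_0|=d_i^\pm\ge 2\sqrt{d_i^\pm}=\sqrt{\Delta}$ and the first branch of (\ref{e5}) applies, giving $r_0=\mbox{sign}(c_0)\big\lfloor b_0/2|c_0|\big\rfloor=0$. Hence $\rho^1(F_{\Delta_{d_i^\pm}})=(-d_i^\pm,0,1)$, which is still not reduced, so a second pass is needed.

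For the second step $(a_1,b_1,c_1)=(-d_i^\pm,0,1)$, so $|c_1|=1<\sqrt{\Delta}$ and the second branch of (\ref{e5}) gives $r_1=\big\lfloor(0+\sqrt{\Delta})/2\big\rfloor=\lfloor\sqrt{d_i^\pm}\rfloor$. Here Theorem \ref{t5} supplies the floor: its continued fraction expansions show $\lfloor\sqrt{d_1^+}\rfloor=\lfloor\sqrt{d_2^+}\rfloor=a^kb^l=c^mh$, while for the subtractive cases $\lfloor\sqrt{d_1^-}\rfloor=\lfloor\sqrt{d_2^-}\rfloor=a^kb^l-1=c^mh-1$. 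Substituting $r_1$ into the recursion yields $\rho^2(F_{\Delta_{d_i^\pm}})=(1,\,2r_1,\,r_1^2-d_i^\pm)$, and it remains only to simplify $c_2=r_1^2-d_i^\pm$ using $a^kb^l=c^mh$ and $d_i^\pm=c^{2m}h^2\pm ic^m$. The four cases give $c_2=-c^m$, $-2c^m$, $1-(2h-1)c^m$, $1-2(h-1)c^m$ respectively, which are exactly the four claimed reductions; one notes that in parts (iii) and (iv) the descriptive subscripts are transposed relative to the $\rho^2(\cdot)$ arguments, and it is the latter that match the computation.

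I do not expect a genuine obstacle, since both passes are forced: the only choices are which branch of (\ref{e5}) to take, and these are settled by the size comparisons $|c_0|=d_i^\pm\ge\sqrt{\Delta}$ and $|c_1|=1<\sqrt{\Delta}$. The one place demanding care is the evaluation of $\lfloor\sqrt{d_i^\pm}\rfloor$ in the second pass, because the subtractive cases lower the floor by one (this is precisely why the leading partial quotient in Theorem \ref{t5} is $a^kb^l-1$ for $d_1^-$ and $d_2^-$); getting this value right is what separates the four output forms. As a closing sanity check one may test each output against (\ref{e2}): for instance for $(1,2hc^m,-c^m)$ the bound $2hc^m<2\sqrt{d_1^+}$ is immediate from $c^mh=a^kb^l<\sqrt{a^{2k}b^{2l}+c^m}$, while $2\sqrt{d_1^+}-2<2hc^m$ reduces to $c^m<2c^mh+1$, which holds since $h\ge 1$; the remaining three forms are handled the same way.
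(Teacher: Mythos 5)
Your proposal is correct and takes essentially the same route as the paper: two applications of $\rho$ starting from $(1,0,-d_i^\pm)$, with $r_0=0$ yielding the non-reduced form $(-d_i^\pm,0,1)$ and then $r_1=\lfloor\sqrt{d_i^\pm}\rfloor$ producing the stated reduced forms. The paper works only case (i) explicitly (with $r_1=a^kb^l$) and dispatches the rest with ``similarly,'' whereas you additionally justify the branch choices in (\ref{e5}), carry out all four cases, correctly observe that $r_1=a^kb^l-1=c^mh-1$ in the subtractive cases, and note the transposed subscripts in items (iii)--(iv) --- all consistent with the paper's argument.
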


\begin{proof}
Let $ F_{\Delta_{d_1^+}} = F_{\Delta_{d_{1,0}^+}} = (1, 0,-d_1^+).$ Then from (\ref{e5}), we get $ r_{0} = 0$
and hence from (\ref{e4}), we have $\rho^1(F_{\Delta_{d_1^+}})= (-d_1^+, 0, 1)$
which is not reduced. If we apply the reduction algorithm to $\rho^2(F_{\Delta_{d_1^+}} )$ again, then
we find that $ r_{1} =a^kb^l$ and so $\rho^2(F_{\Delta_{d_1^+}} ) = (1,2hc^m,-c^m)$
which is reduced. Similarly, we can get the reduction for others forms.
\end{proof}
Now we can consider the cycle and proper cycle of $\rho^2(F_{\Delta_{d_i^\pm}}).$

\begin{thm}
Let us consider the reduction $\rho^2(F_{\Delta_{d_i^+}})$ of $(F_{\Delta_{d_i^+}}).$ Then
\begin{enumerate}
\item The cycle of $\rho^2(F_{\Delta_{d_1^+}} )$ is $ (1,2hc^m,-c^m) \sim (c^m,2hc^m,-1).$
\item The cycle of $\rho^2(F_{\Delta_{d_1^-}} )$ is $ (1,2hc^m-2,1-(2h-1)c^m) \sim (2hc^m-c^m-1,(2h-2)c^m,-c^m)\sim (c^m,2hc^m-2c^m,-2hc^m+c^m+1)\sim (2hc^m-c^m-1,2hc^m-2,-1).$
\item The cycle of $\rho^2(F_{\Delta_{d_2^+} })$ is $ (1,2hc^m-2,1-(2h-1)c^m) \sim (2c^m,2hc^m,-1).$
\item The cycle of $\rho^2(F_{\Delta_{d_2^-} })$ is $ (1,2hc^m-4,1-2(h-1)c^m) \sim (2hc^m-2c^m-1,2(h-2)c^m,-2c^m)\sim (2c^m,2hc^m-4c^m,-2hc^m+2c^m+1)\sim (2hc^m-2c^m-1,2hc^m-4,-1).$
\end{enumerate}
\end{thm}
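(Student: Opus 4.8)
The plan is to run the explicit cycle recursion (\ref{e6})--(\ref{e7}) with $\Delta = \Delta_{d_i^\pm} = 4d_i^\pm$, starting from the reduced form $\rho^2(F_{\Delta_{d_i^\pm}})$ produced in the preceding theorem, and to iterate it until the starting form recurs. Since each $\rho^2(F_{\Delta_{d_i^\pm}})$ is reduced and the recursion sends reduced forms to reduced forms, the resulting sequence is a genuine cycle; so it suffices to show that it closes up after $2$ steps in the $d_i^+$ cases and after $4$ steps in the $d_i^-$ cases.

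The one quantitative input is the integer part of $\sqrt{d_i^\pm}$, which I would pin down using $a^k b^l = c^m h$. Writing $d_i^\pm = c^{2m}h^2 \pm i c^m$, the estimates $(c^m h)^2 < c^{2m}h^2 + i c^m < (c^m h + 1)^2$ and $(c^m h - 1)^2 < c^{2m}h^2 - i c^m < (c^m h)^2$, valid for $i = 1, 2$ under the hypotheses $ab \geq 2, 3$ of Theorem \ref{t5}, give $\lfloor \sqrt{d_1^+} \rfloor = \lfloor \sqrt{d_2^+} \rfloor = c^m h$ and $\lfloor \sqrt{d_1^-} \rfloor = \lfloor \sqrt{d_2^-} \rfloor = c^m h - 1$. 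These are exactly the inequalities that control the floor in $s_j = \big\lfloor (b_j + \sqrt{\Delta})/(2|c_j|) \big\rfloor$ at each stage.

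With these estimates the computation is mechanical substitution into (\ref{e7}). For $\rho^2(F_{\Delta_{d_1^+}}) = (1, 2hc^m, -c^m)$ one gets $s_0 = 2h$, hence $F_1 = (c^m, 2hc^m, -1)$, and then $s_1 = 2hc^m$, which returns $F_2 = (1, 2hc^m, -c^m) = F_0$; the form $\rho^2(F_{\Delta_{d_2^+}}) = (1, 2hc^m, -2c^m)$ behaves identically with $|c_0| = 2c^m$ and $s_0 = h$. The $d_i^-$ cases are longer: starting from $(1, 2hc^m - 2, 1 - (2h-1)c^m)$ the reducing numbers come out as $s_0 = 1$, $s_1 = 2h - 2$, $s_2 = 1$, $s_3 = 2hc^m - 2$, which generate exactly the four displayed forms and return to the first, and $d_2^-$ proceeds in the same manner.

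The main obstacle is the bookkeeping in the $d_i^-$ cases: at each of the four stages $|c_j|$ is different, so the quantity bounding $s_j$ changes every time, and one must check — using the sharp two-sided estimates above — that each floor is exactly as claimed (for example $s_0 = s_2 = 1$ but $s_1 = 2h - 2$). A few of these bounds degenerate at $h = 1$, which is precisely why the conditions $ab \geq 2$ and $ab \geq 3$ are imposed; I would state the relevant inequalities once and reuse them, after which closure of each cycle follows by direct substitution.
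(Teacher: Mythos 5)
Your proposal is correct and takes essentially the same route as the paper: the paper's proof likewise iterates the cycle recursion (\ref{e6})--(\ref{e7}) on the reduced form until it recurs, carrying out only the $d_1^+$ case explicitly (with $s_0=2h$, $s_1=2hc^m$ giving a cycle of length $2$) and declaring the other three cases similar. Your additional step of fixing the two-sided estimates $c^mh<\sqrt{d_i^+}<c^mh+1$ and $c^mh-1<\sqrt{d_i^-}<c^mh$ to justify each floor $s_j$ (and flagging the degeneration at $h=1$) is a more careful version of what the paper leaves implicit, not a different method.
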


\begin{proof}
 Let $\rho^2(F_{\Delta_{d_1^+}})  = \rho^2(F_{\Delta_{d_{1,0}^+}})=(1,2a^kb^l,-a^kb^l).$
Then from (\ref{e6}), we get $ s_{0} = 2h $ and thus
$\rho^2(F_{\Delta_{d_{1,1}^+}})=(a^kb^l,2a^kb^l,-1)$. Again from from (\ref{e6}), we get $s_{1} = 2a^kb^l  $ and hence
$\rho^2(F_{\Delta_{d_{1,2}^+}})=(1,2a^kb^l,-a^kb^l)= \rho^2(F_{\Delta_{d_1^+,0}}).$
So the cycle of $ \rho^2(F_{\Delta_{d_1^+}}) $ is $\rho^2(F_{\Delta_{d_{1,1}^+}})\sim \rho^2(F_{\Delta_{d_{1,1}^+}}).$ Note that $ l = 2, $ therefore from
theorem \ref{t1} the proper cycle of $ \rho^2(F_{\Delta_{d_1^+}})$ is
$ (1,2a^kb^l,-a^kb^l)) \sim  (-2a^kb^l,2a^kb^l,1)$ of length $2$. Similarly for  $\rho^2(F_{\Delta_{d_2^+}}) $ and$\rho^2(F_{\Delta_{d_i^-}})$, we can obtained the required cycle.
\end{proof}
\begin{cor}
The proper cycle of  $\rho^2(F_{\Delta_{d_1^+}}) $ is $(1,2hc^m,-c^m) \sim (-c^m,2hc^m,1)$
of length $2$.\\
The proper cycle of   $\rho^2(F_{\Delta_{d_2^+}}) $ is $ (1,2hc^m,-2c^m) \sim (2c^m,2hc^m,-1).$
of length $2$.\\
The proper cycle of \\ $\rho^2(F_{\Delta_{d_1^-}}) $ is$\rho^2(F_{\Delta_{d_1^-}} )$ is $ (1,2hc^m-2,1-(2h-1)c^m) \sim (-2hc^m+c^m+1,(2h-1)c^m,c^m)\sim (-c^m,2hc^m-2c^m,2hc^m-c^m-1)\sim (-2hc^m+c^m+1,2hc^m-2,1).$
of length 4.\\
The proper cycle of \\$\rho^2(F_{\Delta_{d_2^-}}) $ is  $ (1,2hc^m-2,1-2(h-1)c^m) \sim (-2hc^m+2c^m+1,2(h-2)c^m,2c^m)\sim (-2c^m,2hc^m-4c^m,2hc^m-2c^m-1)\sim (-2hc^m+2c^m+1,2hc^m-4,1).$of length 4.\\
\end{cor}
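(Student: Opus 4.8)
The plan is to obtain every proper cycle directly from the cycles computed in the preceding theorem, using only the structure theorem (Theorem \ref{s1}, following \cite{J}) that converts a cycle into its proper cycle. Each of the four reduced forms $\rho^2(F_{\Delta_{d_i^\pm}})$ has a cycle of even length --- length $2$ for the two $d_i^+$ forms and length $4$ for the two $d_i^-$ forms --- so in every case I would invoke the even-length branch of that theorem. There the proper cycle is read off from the cycle $F_0 \sim F_1 \sim \cdots \sim F_{l-1}$ by leaving the even-indexed forms fixed and replacing each odd-indexed form $F_{2j+1}$ with its conjugate $\tau F_{2j+1}$, where $\tau(a,b,c) = (-a,b,-c)$, producing $F_0 \sim \tau F_1 \sim F_2 \sim \tau F_3 \sim \cdots \sim \tau F_{l-1}$ of the same length $l$. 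No new computation of reductions or of the operators $\rho, \tau$ is required, since the cycles are already in hand.

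First I would dispatch the two length-$2$ cases. For $\rho^2(F_{\Delta_{d_1^+}})$ the cycle is $F_0 \sim F_1$ with $F_0 = (1,2hc^m,-c^m)$ and $F_1 = (c^m,2hc^m,-1)$; applying $\tau$ to the single odd-indexed form gives $\tau F_1 = (-c^m,2hc^m,1)$, so the proper cycle is $(1,2hc^m,-c^m) \sim (-c^m,2hc^m,1)$ of length $2$. The identical substitution applied to the length-$2$ cycle of $\rho^2(F_{\Delta_{d_2^+}})$ yields the second assertion, again of length $2$.

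For the two length-$4$ cases I would carry out the same substitution, now applying $\tau$ to the forms in positions $1$ and $3$ of the four-term cycle while leaving positions $0$ and $2$ fixed. Writing the cycle of $\rho^2(F_{\Delta_{d_1^-}})$ as $F_0 \sim F_1 \sim F_2 \sim F_3$, the proper cycle is $F_0 \sim \tau F_1 \sim F_2 \sim \tau F_3$, and substituting $\tau(a,b,c)=(-a,b,-c)$ into the outer coefficients of $F_1$ and $F_3$ produces the claimed four-term chain; the computation for $\rho^2(F_{\Delta_{d_2^-}})$ is the same verbatim. The main --- indeed the only --- obstacle is bookkeeping: because $\tau$ flips the signs of both outer coefficients while fixing the middle one, I must track those sign changes correctly through the already lengthy expressions such as $-2hc^m+c^m+1$ and $2hc^m-2c^m$, and must apply $\tau$ to precisely the odd-indexed entries and to no others. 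Since each proper cycle then closes up after $l$ steps, the stated lengths ($2$ and $4$) follow immediately from the even-length branch of Theorem \ref{s1}.
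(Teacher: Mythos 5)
Your proposal is correct in method and is essentially the paper's own route: the paper, too, simply takes the cycles computed in the preceding theorem and, since each has even length $l$, invokes the even-length branch of the cycle/proper-cycle theorem (Theorem \ref{s1}, from \cite{J}) to replace the odd-indexed forms $F_{2j+1}$ by $\tau F_{2j+1}$, leaving even-indexed forms fixed --- this is exactly what the proof of the preceding theorem does for $\rho^2(F_{\Delta_{d_1^+}})$ with $l=2$, with the remaining cases handled ``similarly,'' and the corollary carries no separate proof. One caveat: you assert that the substitution lands verbatim on the printed chains, and it does not, because the corollary as printed contains sign and coefficient slips rather than because your method fails. For $d_2^+$ the printed ``proper cycle'' merely repeats the cycle; the substitution gives $(1,2hc^m,-2c^m)\sim(-2c^m,2hc^m,1)$. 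For $d_1^-$ the second form should have middle coefficient $(2h-2)c^m$ (the middle coefficient is fixed by $\tau$, and the cycle has $(2h-2)c^m$ there), and the third form should be the unconjugated $F_2=(c^m,2hc^m-2c^m,-2hc^m+c^m+1)$, whereas the printed entry is $\tau F_2$; the $d_2^-$ line has the analogous issues. So your bookkeeping warning is apt: carried out carefully, your argument proves the corrected statements, and you should flag the discrepancies with the printed corollary rather than claim exact agreement.
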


Now we consider the proper automorphisms of $ F_{\Delta_{d_i^\pm}} .$  To get this we first
consider the following representations of the action of the group $\Gamma$

              $ gF_{\Delta_{d_i^\pm}}=\left(
            \begin{array}{cc}
              x_1^{\pm i} & y_{1}^{\pm i} \\
              y_1^{\pm i}d_i^\pm & x_1^{\pm i} \\
            \end{array}
          \right)$

Then we can give the following theorem which can be proved as in the same way
that theorem \ref{t2} was proved.
\begin{thm}
Let  $d_i^\pm$  denote non-zero square free positive integer . Then
\begin{description}
\item[i] The set of proper automorphisms of $ F_{\Delta_{d_i^\pm}} $  is\\
$ Aut^+( F_{\Delta_{d_i^\pm}}  ) = \{\pm g^n_{F_{\Delta_{d_i^\pm}}}: n\in Z\}.$
\item[ii] The integer solutions of $ F_{\Delta_{d_i^\pm}} (x, y) = 1 $ are $ (x_{n}^{\pm i}, y_{n}^{\pm i}),$ where \\
$ \left(
  \begin{array}{c}
    x_{n}^{\pm i} \\
    y_{n}^{\pm i} \\
  \end{array}
\right)$
$= (g^n_{F_{\Delta_{d_i^\pm}}})\left(
  \begin{array}{c}
    1\\
    0 \\
  \end{array}
\right)$
 for $ n \geq 1.$
\end{description}
\end{thm}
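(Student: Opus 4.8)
The plan is to exploit the classical dictionary between proper automorphisms of the principal form and solutions of the Pell equation, and then to obtain the solution-generating statement (ii) from the matrix-power formula already proved in Theorem~\ref{t2}. Throughout write $d=d_i^\pm$ and $F=F_{\Delta_{d_i^\pm}}=(1,0,-d)$, a form whose discriminant is indeed $b^2-4ac=4d=\Delta_{d_i^\pm}$, and set $g=g_{F_{\Delta_{d_i^\pm}}}$.

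First I would establish (i). Taking a general proper matrix $g=\begin{pmatrix} r & s \\ t & u \end{pmatrix}$ with $ru-st=1$ and imposing $gF=F$ through the action formula (\ref{e3}), the three coefficient equations collapse to $r^2-ds^2=1$, $rt-dsu=0$ and $t^2-du^2=-d$. The first already forces $(r,s)$ to be a solution of $x^2-dy^2=1$; feeding this back into $rt=dsu$ together with $ru-st=1$ pins down $u=r$ and $t=ds$ (and the degenerate case $s=0$ returns exactly $g=\pm I$, i.e.\ the trivial solution $(\pm1,0)$). Hence every proper automorphism has the shape $\begin{pmatrix} r & s \\ ds & r \end{pmatrix}$ with $(r,s)$ a Pell solution, which is precisely the matrix $g$ built from the fundamental solution $(x_1^{\pm i},y_1^{\pm i})$, and a direct multiplication check confirms conversely that each such matrix fixes $F$.

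Next I would note that the assignment $(r,s)\mapsto\begin{pmatrix} r & s \\ ds & r \end{pmatrix}$ is an injective group homomorphism from the group of solutions of $x^2-dy^2=1$ (composed via $r+s\sqrt{d}$) to $\mathrm{SL}_2(\mathbb{Z})$, whose image is $Aut^+(F)$ by the previous paragraph: multiplying two such matrices reproduces the matrix attached to the product solution. By Theorem~\ref{t1} every positive solution is $(x_n^{\pm i},y_n^{\pm i})$ with $x_n^{\pm i}+y_n^{\pm i}\sqrt{d}=(x_1^{\pm i}+y_1^{\pm i}\sqrt{d})^n$; adjoining the conjugate solutions (the negative powers) and the sign coming from $(\pm1,0)\mapsto\pm I$ exhausts the full solution group as $\pm(x_1^{\pm i}+y_1^{\pm i}\sqrt{d})^n$ for $n\in\mathbb{Z}$. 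Transporting this across the isomorphism yields $Aut^+(F_{\Delta_{d_i^\pm}})=\{\pm g^n:\ n\in\mathbb{Z}\}$, which is assertion (i).

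Finally, for (ii) I would observe that $(1,0)$ is the trivial solution of $F(x,y)=1$ since $F(1,0)=1$, and that the automorphism attached to $(x_1^{\pm i},y_1^{\pm i})$ carries $(x_{n-1}^{\pm i},y_{n-1}^{\pm i})$ to $(x_n^{\pm i},y_n^{\pm i})$---this is exactly the recurrence recorded in (\ref{e8}). Iterating from $(1,0)$ and reading the entries off the $n$th power computed in Theorem~\ref{t2} then produces the asserted closed forms for $(x_n^{\pm i},y_n^{\pm i})$, so the computational core is genuinely the same induction already carried out there. The point that needs the most care---and the natural place to slip---is the transpose convention: the action (\ref{e3}) substitutes $g^{T}$ into $F$, so it is $g^{T}$, equivalently the matrix $M_{\pm i}$ of (\ref{e9}) which is the transpose of $g$, that literally sends the column $(1,0)^{T}$ to $(x_n^{\pm i},y_n^{\pm i})^{T}$; I would therefore phrase the column-vector identity so that the second coordinate emerges as $y_n^{\pm i}$ rather than $d\,y_n^{\pm i}$. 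The only other place demanding rigor is the completeness direction in (i), where one must invoke the full group structure of Pell solutions (not merely the positive ones) to be sure the list $\{\pm g^n\}$ omits nothing.
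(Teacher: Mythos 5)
Your proof is correct, but it is a genuinely different (and substantially fuller) route than the paper's, because the paper in fact prints no proof of this theorem at all: it merely displays the matrix attached to the fundamental solution and asserts that the result ``can be proved in the same way that Theorem \ref{t2} was proved,'' i.e., by the same binomial induction on matrix powers. For part (ii) your argument coincides with that intended computation --- iterating the fundamental automorphism from the trivial solution $(1,0)$ via the recurrence (\ref{e8}) and reading the entries off $M_{\pm i}^{n}$ from Theorem \ref{t2} --- so there the two proofs share their computational core. For part (i), however, you supply what the paper leaves entirely implicit: a classification of $Aut^{+}(F)$ obtained by solving $gF=F$ coefficientwise (the equations $r^{2}-ds^{2}=1$, $rt=dsu$, $t^{2}-du^{2}=-d$ together with $ru-st=1$ force $u=r$, $t=ds$, with $s=0$ yielding $\pm I$), followed by transporting the unit-group structure $\{\pm\varepsilon^{n}:n\in\mathbb{Z}\}$, $\varepsilon=x_{1}^{\pm i}+y_{1}^{\pm i}\sqrt{d_{i}^{\pm}}$, across the injective homomorphism $(r,s)\mapsto\bigl(\begin{smallmatrix} r & s\\ ds & r \end{smallmatrix}\bigr)$. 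This is exactly what the completeness inclusion $Aut^{+}(F)\subseteq\{\pm g^{n}\}$ requires, and it is something an induction in the style of Theorem \ref{t2} can never deliver: induction only shows that each $\pm g^{n}$ is an automorphism and each iterate of $(1,0)$ is a solution, not that nothing else occurs. Your transpose remark is also a genuine catch rather than pedantry: the matrix displayed before the theorem sends $(1,0)^{T}$ to $(x_{1}^{\pm i},\,d_{i}^{\pm}y_{1}^{\pm i})^{T}$, so assertion (ii) is literally true only for its transpose $M_{\pm i}$ of (\ref{e9}); since $gF=F\circ g^{T}$ under the action (\ref{e3}), your rephrasing is the correct repair of the paper's statement. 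The one step to make explicit in a final write-up is that $g^{-1}$ corresponds to the conjugate $x_{1}^{\pm i}-y_{1}^{\pm i}\sqrt{d_{i}^{\pm}}$ and is again an integral matrix of the same shape, so the negative powers in $\{\pm g^{n}:n\in\mathbb{Z}\}$ really do lie in the group acting on forms.
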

\begin{rk}
If $c=a$ and $k=l=m=1$ and $b=1$, then the main results of \cite{Atz} become the corollaries of our main results.
\end{rk}

\end{document}